\def\T{\mathcal{T}}
\def\hpsi{\hat{\psi}}
\begin{document}
\newtheorem{theorem}{Theorem}
\newtheorem{lemma}[theorem]{Lemma}
\newtheorem{corollary}[theorem]{Corollary}
\newtheorem{definition}[theorem]{Definition}
\newtheorem{example}[theorem]{Example}
\pagenumbering{roman}
\renewcommand{\thetheorem}{\thesection.\arabic{theorem}}
\renewcommand{\thelemma}{\thesection.\arabic{lemma}}
\newenvironment{proof}{\noindent{\bf{Proof.\/}}}{\hfill$\blacksquare$\vskip0.1in}
\renewcommand{\thetable}{\thesection.\arabic{table}}
\renewcommand{\thedefinition}{\thesection.\arabic{definition}}
\renewcommand{\theexample}{\thesection.\arabic{example}}
\renewcommand{\theequation}{\thesection.\arabic{equation}}
\newcommand{\mysection}[1]{\section{#1}\setcounter{equation}{0}
\setcounter{theorem}{0} \setcounter{lemma}{0}
\setcounter{definition}{0}}
\newcommand{\mrm}{\mathrm}
\newcommand{\be}{\begin{equation}}
\newcommand{\beq}{\begin{equation}}
\newcommand{\ee}{\end{equation}}
\newcommand{\eeq}{\end{equation}}
\newcommand{\ben}{\begin{enumerate}}
\newcommand{\een}{\end{enumerate}}

\title
{\bf PVTSI$^{\boldmath(m)}$: A Novel Approach to Computation of Hadamard Finite  Parts of  Nonperiodic Singular Integrals}

\author
{Avram Sidi\\
Computer Science Department\\
Technion - Israel Institute of Technology\\ Haifa 32000, Israel\\
E-mail:\quad  \url{asidi@cs.technion.ac.il}\\
URL:\quad    \url{http://www.cs.technion.ac.il/~asidi}}
\date{\today}
\bigskip\bigskip
\maketitle \thispagestyle{empty}
\newpage\noindent

\begin{abstract}
 We consider the numerical computation of
  $I[f]=\intBar^b_a f(x)\,dx$, the Hadamard Finite Part of the finite-range singular integral $\int^b_a f(x)\,dx$,    $f(x)=g(x)/(x-t)^{m}$ with $a<t<b$ and $m\in\{1,2,\ldots\},$  assuming that (i)\,$g\in C^\infty(a,b)$ and (ii)\,$g(x)$ is  allowed to have arbitrary  integrable singularities at the endpoints $x=a$ and $x=b$.
  We first prove that  $\intBar^b_a f(x)\,dx$ is invariant under any suitable variable transformation $x=\psi(\xi)$,  $\psi:[\alpha,\beta]\rightarrow[a,b]$, hence
  there holds  $\intBar^\beta_\alpha F(\xi)\,d\xi=\intBar^b_a f(x)\,dx$,  where $F(\xi)=f(\psi(\xi))\,\psi'(\xi)$. Based on this result, we next choose $\psi(\xi)$ such that the transformed integrand $F(\xi)$ is sufficiently periodic with period $\T=\beta-\alpha$,  and prove, with the help of some recent extension/generalization of  the Euler--Maclaurin expansion, that we can apply to
  $\intBar^\beta_\alpha F(\xi)\,d\xi$ the quadrature formulas derived for periodic singular integrals developed in an earlier work of the author. We give  a whole family of numerical quadrature formulas for $\intBar^\beta_\alpha F(\xi)\,d\xi$ for each $m$, which we  denote $\widehat{T}^{(s)}_{m,n}[{\cal F}]$,
  where ${\cal F}(\xi)$ is the $\T$-periodic extension of $F(\xi)$.
  Letting $G(\xi)=(\xi-\tau)^m F(\xi)$, with $\tau$ determined from $t=\psi(\tau)$,  and letting $h=\T/n$, for $m=3$, for example, we have the three formulas
  \begin{align*} \widehat{T}^{(0)}_{3,n}[{\cal F}]&=h\sum^{n-1}_{j=1}{\cal F}(\tau+jh)-\frac{\pi^2}{3}\,G'(\tau)\,h^{-1}
   +\frac{1}{6}\,G'''(\tau)\,h,\\
 \widehat{T}^{(1)}_{3,n}[{\cal F}]&=h\sum^n_{j=1}{\cal F}(\tau+jh-h/2)-\pi^2\,G'(\tau)\,h^{-1},\\
 \widehat{T}^{(2)}_{3,n}[{\cal F}]&=2h\sum^n_{j=1}{\cal F}(\tau+jh-h/2)-
\frac{h}{2}\sum^{2n}_{j=1}{\cal F}(\tau+jh/2-h/4).
\end{align*}
 We   show that all of the formulas $\widehat{T}^{(s)}_{m,n}[{\cal F}]$ converge to $I[f]$ as $n\to\infty$; indeed, if $\psi(\xi)$ is chosen such that ${\cal F}^{(i)}(\alpha)={\cal F}^{(i)}(\beta)=0$,
 $i=0,1,\ldots,q-1,$ and ${\cal F}^{(q)}(\xi)$ is absolutely integrable in every closed interval not containing $\xi=\tau$, then
  $$  \widehat{T}^{(s)}_{m,n}[{\cal F}]-I[f]=O(n^{-q})\quad\text{as $n\to\infty$},$$
  where $q$ is a positive integer determined by the behavior of $g(x)$ at $x=a$ and $x=b$ and also by $\psi(\xi)$. As such, $q$ can be increased arbitrarily (even to $q=\infty$) by choosing $\psi(\xi)$ suitably.
   We  provide several  numerical examples involving nonperiodic integrands and   confirm our theoretical results.
 \end{abstract}

\vspace{1cm} \noindent {\bf Mathematics Subject Classification 2010:}
 41A55,  41A60, 45B05, 45E05, 65B05, 65B15,  65D30, 65D32.

\vspace{1cm} \noindent {\bf Keywords and expressions:} Hadamard   Finite Part,
singular integrals, hypersingular integrals,
 supersingular integrals, Euler--Maclaurin expansions, asymptotic expansions,
 variable transformation,
 numerical quadrature, trapezoidal rule.

\thispagestyle{empty}
\newpage\
\pagenumbering{arabic}

\section{Introduction and background} \label{se1}
  \setcounter{equation}{0} \setcounter{theorem}{0}

Singular integrals $\int^b_a f(x)\,dx$ that do not exist in the regular sense but  are defined in the sense of  {\em Hadamard Finite Part (HFP)} arise in different areas of science and engineering, and  the numerical computation of their HFPs, denoted
\be \label{eq1} I[f]=\intBar^b_af(x)\,dx,\ee has been of considerable interest.
Of special interest are the integrals $\int^b_a f(x)\,dx$, whose integrands are of the
general form
\be \label{eq2} f(x) =\frac{g(x)}{(x-t)^m},\quad a<t<b,\quad m\in\{1,2,\ldots\}. \ee
The cases with $m=1,2,3$ occur in many applications and they are known as
{\em Cauchy Principal Value} integrals, {\em hypersingular} integrals, and
{\em supersingular} integrals, respectively.\footnote{We reserve the notation $\int^b_af(x)\,dx$ for integrals that exist in the regular sense. The  notation used for the Hadamard Finite Part of the integral $\int^b_a f(x)\,dx$ is $\intBar^b_a f(x)\,dx$ in general, while the accepted notation for the Cauchy Principal Value of the  integral $\int^b_a f(x)\,dx$  is $\intbar^b_a f(x)\,dx$.}
Many  numerical quadrature formulas for computing these three types of singular integrals
 can be found in the literature.

In the  papers Sidi and Israeli \cite{Sidi:1988:QMP} and  Sidi \cite{Sidi:2013:CNQ}, \cite{Sidi:2019:SSI-P1}, and  \cite{Sidi:2019:SSI-P2}, we derived and studied some interesting generalizations of the Euler--Maclaurin (E--M) expansion for singular integrals of the form described in \eqref{eq1}-\eqref{eq2}, and  we treated the special cases of $m=1,2,3$ in detail. Based on these generalized E--M expansions, we developed numerical quadrature formulas for the case in which the $T$-periodic extension of $f(x)$---which, we denote also by $f(x)$---is infinitely differentiable for all $x\in\mathbb{R}_t$, that is,  $f\in C^\infty(\mathbb{R}_t)$, with
\beq\label{eqRt} T=b-a,\quad \mathbb{R}_t=\mathbb{R} \setminus\{t+kT\}^\infty_{k=-\infty}.\eeq
  All these quadrature formulas are very effective and enjoy {\em spectral} accuracy. In view of this, one may ask as to whether they will continue to be effective  when  the  $T$-periodic extension of $f(x)$ fails to be infinitely differentiable on $\mathbb{R}_t$. This is precisely the issue we  address in this work by  relaxing considerably the condition that
   $f\in C^\infty(\mathbb{R}_t)$.

We assume throughout this work that  $f(x)$ in \eqref{eq1} is as in
\begin{gather} f(x) =\frac{g(x)}{(x-t)^m},\quad a<t<b,\quad m\in\{1,2,\ldots\}, \notag\\
g\in C^\infty(a,b),\quad g(x)\ \text{integrable at $x=a$ and $x=b$.} \label{eq3}\end{gather}
We note first that $g(x)$ being integrable at $x=a$ and $x=b$ is the same as $f(x)$ being integrable at $x=a$ and $x=b$.
Next, we note  that we are {\em not}  imposing on $g(x)$ [equivalently, on $f(x)$]  differentiability or even continuity conditions
at $x=a$ and $x=b$. Summarizing, the functions $f(x)$ treated in this work satisfy the following conditions:
\begin{itemize}
\item [(i)]
they are in $C^\infty((a,t)\cup(t,b))$,
\item [(ii)]
they have  a {\em nonintegrable polar singularity} at $x=t$, and
\item [(iii)]
they are allowed to have  {\em arbitrary integrable  singularities}  at $x=a$ and $x=b$.
\end{itemize}

Our approach to the numerical treatment of the integrals $\intBar^b_a f(x)\,dx$ under \eqref{eq3} proceeds in two steps: (i)\,First, we periodize the integrands $f(x)$ in \eqref{eq3} in some sense by using  suitable variable  transformations. (ii)\,Next, we use the appropriate  quadrature formulas    developed in \cite{Sidi:1988:QMP},
 \cite{Sidi:2013:CNQ}, and  \cite{Sidi:2019:SSI-P1} on the transformed integrals.

There are, however, three major questions related to this approach that need to be addressed:
\begin{enumerate}
\item We know that, if $\int^b_a u(x)\,dx$ exists as a regular integral, a legitimate variable transformation will not change its value. Can we guarantee that this will be the case also for the  HFP integrals $\intBar^b_af(x)\,dx$ considered here, which do not exist in the regular sense? This question is relevant since HFP integrals have most, but not all, of the properties of regular integrals and some properties that are quite
unusual. For example, they are invariant with respect to translation of the variable of integration $x$, but they are not necessarily invariant
under a nonlinear or even linear scaling of $x$. To see  this, let us consider the HFP integral $\intBar^1_0dx/x=\!0$
 given in Davis and Rabinowitz \cite[p. 13]{Davis:1984:MNI}.\\
  $\bullet$ Following the {\em nonlinear} scaling variable transformation $x=y^2/2$, the resulting HFP integral is $2\intBar^{\sqrt{2}}_0dy/y=\log 2.$ \\
 $\bullet$ Following the {\em linear} scaling variable transformation $x=2y$, the resulting HFP integral is     $\intBar^{1/2}_0dy/y=-\log 2.$

  The variable transformations we will be using are {\em nonlinear} scalings of $x$.
\item
Does the variable transformation change the nature of the singularity at $x=t$?
If so, in what way?
\item
The quadrature formulas of  \cite{Sidi:1988:QMP}, \cite{Sidi:2013:CNQ}, and  \cite{Sidi:2019:SSI-P1} have spectral accuracy when $g\in C^\infty[a,b]$ and $f(x)$ is $T$-periodic and $f\in C^\infty(\mathbb{R}_t)$. Can we guarantee that they will be effective when either (i)\,the periodic extension of $f(x)$ is not infinitely differentiable on $\mathbb{R}_t$, or (ii)\,$g(x)$ is not infinitely differentiable, or differentiable at all,  at $x=a$ and/or $x=b$?
\end{enumerate}
The answer to the first question is yes if $f(x)$ is as in \eqref{eq3}. We give a detailed proof of this  in Theorem \ref{thpsi} in Section~\ref{se3}. The answer to the second question is no, as we show again in Section \ref{se3}; the singularity in the transformed integrand remains a pole of order $m$ because $a<t<b$.
The answer to the third question
is  yes provided we use
suitable variable transformations, and this is the subject of Theorem \ref{thw2} in Section~\ref{se5}.

In Section \ref{se2}, we give a brief description of the quadrature methods
developed in \cite{Sidi:2019:SSI-P1}
for the singular integrals in \eqref{eq1}, in case $f(x)$ is $T$-periodic and
infinitely differentiable for all $x$, except at $x=t+kT,$ $k=0,\pm1,\pm2,\ldots.$
In Section \ref{se3}, we provide a detailed analysis of the singular integrals in \eqref{eq1} and \eqref{eq3}
under  legitimate variable transformations.
In Section~\ref{se4}, we  discuss the issue of periodization of the integrand $f(x)$ via  suitable variable transformations and explore the analytical behavior of the transformed integrand in detail.
In Section \ref{se5}, we develop the quadrature formulas of this work for the nonperiodic singular integrals in \eqref{eq1}, where the integrands $f(x)$ are as in \eqref{eq3}. These formulas are based on a refined asymptotic analysis of the transformed integrand, followed by the application of Theorem  \ref{thA} in the appendix to this work that extends a generalized Euler--Maclaurin expansion due to the author given in \cite{Sidi:2012:EME-P1}. We note that this appendix forms an integral part of this work. We will call the approach leading to the quadrature formulas thus developed  {\em Perodizing Variable Transformed Singular Integration} and will denote it  {\em PVTSI\,$^{(m)}$} for short.
Finally, in Section~\ref{se6}, we provide numerical examples that illustrate the use of the approach proposed and confirm the theoretical results of this paper.

Before proceeding further, we note the following facts concerning  the Riemann Zeta function $\zeta(z)$, which we will need later:
$$\zeta(-2k)=0,\ \ k=1,2,\ldots;\quad
\zeta(2k)=(-1)^{k+1}\frac{(2\pi)^{2k}}{2(2k)!}B_{2k}>0,\quad k=0,1,\ldots.$$
Here $B_s$ are the  Bernoulli numbers.
Hence $\zeta(2k)$,  $k=0,1,\ldots,$ are all known; for example,
$$ \zeta(0)=-\frac{1}{2},\quad \zeta(2)=\frac{\pi^2}{6},\quad \zeta(4)=\frac{\pi^4}{90}, \quad \text{and so on.}$$

\section{Review of numerical quadrature formulas for periodic \\ singular  integrals} \label{se2}
\setcounter{equation}{0} \setcounter{theorem}{0}

\subsection{Review of numerical quadrature formulas for arbitrary  $m$} \label{sse22}
 In  \cite{Sidi:2019:SSI-P1}, we developed the following numerical quadrature formulas
 for the HFP integrals $\intBar^b_a f(x)\,dx$, where $f(x)$ are as in \eqref{eq2} with arbitrary integer $m$,  are $T$-periodic and belong to  $C^\infty(\mathbb{R}_t)$, with $T$ and $\mathbb{R}_t$ as in \eqref{eqRt}:
\begin{itemize}
\item
For even $m$, $m=2r$, $r=1,2,\ldots,$ and with $h=T/n$, we have
\beq \label{eqT0meven}\widehat{T}^{(0)}_{2r,n}[f]=h\sum^{n-1}_{j=1}f(t+jh)
-2\sum^r_{i=0}\frac{g^{(2i)}(t)}{(2i)!}\,\zeta(2r-2i)\,h^{-2r+2i+1}.\eeq
\item
For odd $m$,  $m=2r+1,$  $r=0,1,\ldots,$  and with $h=T/n$, we have
\beq \label{eqT0modd} \widehat{T}^{(0)}_{2r+1,n}[f]=h\sum^{n-1}_{j=1}f(t+jh)
-2\sum^r_{i=0}\frac{g^{(2i+1)}(t)}{(2i+1)!}\,\zeta(2r-2i)\,h^{-2r+2i+1}.\eeq
\end{itemize}

We also proved that, as $n\to\infty$,  $\widehat{T}^{(0)}_{m,n}[f]\to I[f]$   spectrally, that is,
\beq\label{eqT0conv} \widehat{T}^{(0)}_{m,n}[f]-I[f]=o(n^{-\mu})\quad\text{as $n\to\infty$}\quad \forall \,\mu>0.\eeq

\sloppypar

In addition, we  showed that,
with the $\widehat{T}^{(0)}_{m,n}[f]$ available, we can construct the numerical quadrature formulas
$\widehat{T}^{(s)}_{m,n}[f]$, $s=1,2,\ldots,\lfloor \frac{m+2}{2}\rfloor,$ by performing $s$ steps of  a ``Richardson-like extrapolation'' process on the relevant sequences $\{\widehat{T}^{(0)}_{m,2^kn}[f]\}^{s}_{k=0}$, by which we eliminate the powers $h^1, h^{-1},h^{-3},\ldots,$ in this order, from $\widehat{T}^{(0)}_{m,n}[f]$.
This also amounts to eliminating the  $g^{(p)}(t)$ from $\widehat{T}^{(0)}_{m,n}[f]$ one by one,
starting from the highest order derivative and down. Thus, we eliminate
$g^{(m)}(t),g^{(m-2)}(t),\ldots,g^{(2)}(t),g^{(0)}(t),$ for even $m$
and $g^{(m)}(t),g^{(m-2)}(t),\ldots,g^{(3)}(t),g^{(1)}(t)$ for odd $m$.
For example, with $s=1,2,3$,  we have
$$\widehat{T}^{(1)}_{m,n}[f]=2\widehat{T}^{(0)}_{m,2n}[f]-\widehat{T}^{(0)}_{m,n}[f],$$
\begin{align*}
\widehat{T}^{(2)}_{m,n}[f]&=2\widehat{T}^{(1)}_{m,n}[f]-\widehat{T}^{(1)}_{m,2n}[f]\\
&=-2\widehat{T}^{(0)}_{m,n}[f]+5\widehat{T}^{(0)}_{m,2n}[f]-2\widehat{T}^{(0)}_{m,4n}[f],
\end{align*}
\begin{align*} \widehat{T}^{(3)}_{m,n}[f]&=\frac{8}{7}\widehat{T}^{(2)}_{m,n}[f]-\frac{1}{7}\widehat{T}^{(2)}_{m,2n}[f]\\
&= \frac{16}{7}\widehat{T}^{(1)}_{m,n}[f]-\frac{10}{7}\widehat{T}^{(1)}_{m,2n}[f]+
\frac{2}{7}\widehat{T}^{(1)}_{m,4n}[f]\\
&=-\frac{16}{7}\widehat{T}^{(0)}_{m,n}[f]+6\widehat{T}^{(0)}_{m,2n}[f]
-3\widehat{T}^{(0)}_{m,4n}[f]+\frac{2}{7}\widehat{T}^{(0)}_{m,8n}[f].\end{align*}
In general, eliminating only the powers $h^1,h^{-1},h^{-3},\ldots, h^{-2s+3},$ we have
\be\label{eqalpha} \widehat{T}^{(s)}_{m,n}[f]=\sum^s_{k=0}\alpha^{(s)}_{m,k} \widehat{T}^{(0)}_{m,2^kn}[f],\quad
\sum^s_{k=0}\alpha^{(s)}_{m,k}=1;\quad \text{$\alpha^{(s)}_{m,k}$ independent of $n$}.\ee

 Concerning the formulas $\widehat{T}^{(s)}_{m,n}[f]$, we have the following general convergence theorem:

\begin{theorem}\label{thw1t}
If $f(x)$ is as in \eqref{eq2}, $T$-periodic, and infinitely differentiable for all $x\in\mathbb{R}_t$, then all the numerical quadrature formulas $\widehat{T}^{(s)}_{m,n}[f]$ in \eqref{eqalpha} converge to $I[f]=\intBar^b_af(x)\,dx$ with spectral accuracy, namely,
\be \widehat{T}^{(s)}_{m,n}-I[f]=o(n^{-\mu})\quad\text{as $n\to\infty$} \quad\forall\,\mu>0.\ee
In words, the errors $\widehat{T}^{(s)}_{m,n}[f]-I[f]$ tend to zero as $n\to\infty$ faster than every negative power of $n$.
\end{theorem}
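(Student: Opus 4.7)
The plan is to reduce Theorem \ref{thw1t} to the already-established base case $s=0$, since under the hypotheses stated the spectral-convergence statement \eqref{eqT0conv} is available from the author's earlier work: namely $\widehat{T}^{(0)}_{m,n}[f]-I[f]=o(n^{-\mu})$ as $n\to\infty$, for every $\mu>0$.

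The key observation is the structural identity \eqref{eqalpha},
\[
\widehat{T}^{(s)}_{m,n}[f]=\sum_{k=0}^{s}\alpha^{(s)}_{m,k}\,\widehat{T}^{(0)}_{m,2^kn}[f],\qquad \sum_{k=0}^{s}\alpha^{(s)}_{m,k}=1,
\]
in which the coefficients $\alpha^{(s)}_{m,k}$ are independent of $n$. Using $\sum_k \alpha^{(s)}_{m,k}=1$ to write $I[f]=\sum_k\alpha^{(s)}_{m,k}I[f]$, I would subtract term-by-term to obtain
\[
\widehat{T}^{(s)}_{m,n}[f]-I[f]=\sum_{k=0}^{s}\alpha^{(s)}_{m,k}\bigl(\widehat{T}^{(0)}_{m,2^kn}[f]-I[f]\bigr).
\]
For each fixed $k\in\{0,1,\ldots,s\}$, the base case applied with $n$ replaced by $2^kn\to\infty$ gives $\widehat{T}^{(0)}_{m,2^kn}[f]-I[f]=o\bigl((2^kn)^{-\mu}\bigr)=o(n^{-\mu})$ for every $\mu>0$, since $2^{-k\mu}$ is a fixed positive constant. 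As the sum has only $s+1$ terms, with fixed coefficients independent of $n$, the linear combination is itself $o(n^{-\mu})$ for every $\mu>0$, which is the desired spectral-convergence conclusion.

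There is essentially no substantive obstacle here: the entire content of the theorem for $s\ge 1$ is inherited from the $s=0$ case through the fact that Richardson-like extrapolation is an $n$-independent, consistency-preserving linear combination (coefficients summing to $1$). The genuine work — establishing spectral convergence of $\widehat{T}^{(0)}_{m,n}[f]$ itself via the generalized Euler--Maclaurin expansions of \cite{Sidi:2019:SSI-P1} — has already been done. The only point worth being careful about is verifying the reduction via $\sum_k\alpha^{(s)}_{m,k}=1$, which is precisely the consistency condition built into \eqref{eqalpha}, and noting that passing from $n$ to $2^kn$ for fixed $k$ does not degrade an $o(n^{-\mu})$ bound. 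The utility of the $\widehat{T}^{(s)}_{m,n}[f]$ over $\widehat{T}^{(0)}_{m,n}[f]$ will become apparent only in the nonperiodic setting of Section \ref{se5}, where eliminating specific powers of $h$ improves the asymptotic rate.
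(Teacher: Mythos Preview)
Your argument is correct. Note, however, that the paper does not actually supply a proof of Theorem~\ref{thw1t}: it appears in Section~\ref{se2}, which is explicitly a \emph{review} of results established in \cite{Sidi:2019:SSI-P1}, and both \eqref{eqT0conv} and Theorem~\ref{thw1t} are simply quoted from that reference. Your reduction via \eqref{eqalpha} --- writing the error as a fixed finite linear combination of the $s=0$ errors at step sizes $2^kn$ and invoking \eqref{eqT0conv} termwise --- is precisely the natural argument and is presumably what underlies the original proof as well; there is nothing to compare against here.

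One minor remark on your closing comment: the practical utility of $\widehat{T}^{(s)}_{m,n}[f]$ over $\widehat{T}^{(0)}_{m,n}[f]$ is not confined to the nonperiodic setting of Section~\ref{se5}. Already in the periodic case the extrapolation eliminates the derivatives $g^{(p)}(t)$ (equivalently $G^{(p)}(\tau)$) from the formula, so that, e.g., $\widehat{T}^{(2)}_{3,n}[f]$ in \eqref{eqT32} requires no derivative evaluations at all, whereas $\widehat{T}^{(0)}_{3,n}[f]$ in \eqref{eqT30} requires $g'(t)$ and $g'''(t)$. This does not affect the correctness of your proof, only the accuracy of that side remark.
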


\subsection{Review of the cases $m=1,2,3,4$}
For $m=1,2,3,$ the formulas above assume the following specific forms:

\begin{enumerate}
\item
The case $m=1$:
\begin{subequations}
\begin{align} \widehat{T}^{(0)}_{1,n}[f]&=h\sum^{n-1}_{j=1}f(t+jh)+g'(t)h, \label{eqT10}\\
 \widehat{T}^{(1)}_{1,n}[f]&=h\sum^{n}_{j=1}f(t+jh-h/2).\label{eqT11}\end{align}
 \end{subequations}
\item
The case $m=2$:
\begin{subequations}
\begin{align} \widehat{T}^{(0)}_{2,n}[f]&=h\sum^{n-1}_{j=1}f(t+jh)-\frac{\pi^2}{3}g(t)h^{-1}
+\frac{1}{2}g''(t)h, \label{eqT20}\\
 \widehat{T}^{(1)}_{2,n}[f]&=h\sum^{n}_{j=1}f(t+jh-h/2) -\pi^2g(t)h^{-1},\label{eqT21} \\
\widehat{T}^{(2)}_{2,n}[f]&=2h\sum^{n}_{j=1}f(t+jh-h/2) -
\frac{h}{2}\sum^{2n}_{j=1}f(t+jh/2-h/4).\label{eqT22}
 \end{align}
 \end{subequations}
 \item
The case $m=3$:
\begin{subequations}
\begin{align} \widehat{T}^{(0)}_{3,n}[f]&=h\sum^{n-1}_{j=1}f(t+jh)-\frac{\pi^2}{3}g'(t)h^{-1}
+\frac{1}{6}g'''(t)h, \label{eqT30}\\
 \widehat{T}^{(1)}_{3,n}[f]&=h\sum^{n}_{j=1}f(t+jh-h/2) -\pi^2g'(t)h^{-1}, \label{eqT31}\\
 \widehat{T}^{(2)}_{3,n}[f]&=2h\sum^{n}_{j=1}f(t+jh-h/2) -
\frac{h}{2}\sum^{2n}_{j=1}f(t+jh/2-h/4).\label{eqT32}\end{align}
\end{subequations}
\item
{\em The case $m=4$}:
\begin{subequations}
\begin{align}
\widehat{T}^{(0)}_{4,n}[f]&=h\sum^{n-1}_{j=1}f(t+jh)-\frac{\pi^4}{45}g(t)h^{-3}
-\frac{\pi^2}{6}g''(t)h^{-1}+\frac{1}{24}g^{(4)}(t)h, \label{eqT40} \\
\widehat{T}^{(1)}_{4,n}[f]&=h\sum^{n}_{j=1}f(t+jh-h/2)-\frac{\pi^4}{3}g(t)h^{-3}
-\frac{\pi^2}{2}g''(t)h^{-1}, \label{eqT41} \\
\widehat{T}^{(2)}_{4,n}[f]&=2h\sum^{n}_{j=1}f(t+jh-h/2)
-\frac{h}{2}\sum^{2n}_{j=1}f(t+jh/2-h/4)+2\pi^4g(t)h^{-3} \label{eqT42},  \\
\widehat{T}^{(3)}_{4,n}[f]&=\frac{16h}{7}\sum^{n}_{j=1}f(t+jh-h/2)
-\frac{5h}{7}\sum^{2n}_{j=1}f(t+jh/2-h/4)\notag \\
&\hspace{4.5cm}+\frac{h}{28}\sum^{4n}_{j=1}f(t+jh/4-h/8). \label{eqT43}
\end{align}
\end{subequations}
\end{enumerate}

These formulas are derived  and studied in
\cite{Sidi:1988:QMP} (for $m=1$), in \cite{Sidi:2013:CNQ} (for $m=1,2$), and  in \cite{Sidi:2019:SSI-P1}, \cite{Sidi:2019:SSI-P2} (for $m=3$).
Concerning the formulas, we have the following convergence theorem
that strengthens Theorem \ref{thw1t}:

\begin{theorem}\label{thw2t}
If $f(z)$ is both $T$-periodic and analytic in a strip $D_\sigma$ of the complex $z$-plane,
$$D_\sigma=\{z\in \mathbb{C}:\ |\Im z|<\sigma\},$$ then,  for $m=1,2,3$, we have
\be \widehat{T}^{(s)}_{m,n}-I[f]=O(e^{-2n\pi \rho/T})\quad\text{as $n\to\infty$},\quad
\forall \rho<\sigma.\ee
\end{theorem}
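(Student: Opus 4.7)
The plan is to leverage the representation of the error $\widehat{T}^{(0)}_{m,n}[f]-I[f]$ that already underlies the spectral-accuracy result of Theorem~\ref{thw1t}. Under the mere smoothness hypothesis $f\in C^\infty(\mathbb{R}_t)$, that representation yields only $o(n^{-\mu})$; the strengthening to an exponential rate comes from the classical fact that a $T$-periodic function analytic in the horizontal strip $D_\sigma$ has Fourier coefficients decaying like $e^{-2\pi|k|\rho/T}$ for every $\rho<\sigma$, or equivalently admits contour deformations onto the two lines $\Im z=\pm\rho$.

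First I would reduce to the case $s=0$. Since \eqref{eqalpha} writes $\widehat{T}^{(s)}_{m,n}[f]$ as a finite linear combination $\sum_{k=0}^{s}\alpha^{(s)}_{m,k}\widehat{T}^{(0)}_{m,2^kn}[f]$ whose coefficients sum to $1$, the identity
$$\widehat{T}^{(s)}_{m,n}[f]-I[f]=\sum_{k=0}^{s}\alpha^{(s)}_{m,k}\bigl(\widehat{T}^{(0)}_{m,2^kn}[f]-I[f]\bigr)$$
shows that a bound of the form $O(e^{-2n\pi\rho/T})$ for $s=0$ forces the same bound for every $s\geq 1$, because each summand on the right is $O(e^{-2^{k+1}n\pi\rho/T})\subseteq O(e^{-2n\pi\rho/T})$ and the sum is finite and independent of $n$.

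Next I would represent the $s=0$ error as a contour integral. The role of the correction terms in \eqref{eqT0meven} and \eqref{eqT0modd} is precisely to absorb the local contribution at $z=t$ coming from the pole of $f$, so that the error should take the form
$$\widehat{T}^{(0)}_{m,n}[f]-I[f]=\frac{1}{2i}\int_{\Gamma_\rho}K_{m,n}(z-t)\,f(z)\,dz,$$
where $\Gamma_\rho$ consists of the two horizontal lines $\Im z=\pm\rho$ traversed in opposite directions across a single period, and $K_{m,n}$ is a $T$-periodic kernel whose $m=1$ instance is essentially $\cot(n\pi w/T)\mp i$ and whose $m\geq 2$ instances are obtained by differentiating this kernel $m-1$ times. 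The derivation starts from a small contour enclosing $z=t$ (which recovers $I[f]$ together with the Hadamard correction) and deforms outward; periodicity of both $f$ and $K_{m,n}$ cancels the vertical sides of the period rectangle, while analyticity of $f$ throughout $D_\sigma$ guarantees that no further singularities are crossed. On $\Gamma_\rho$ one has $|K_{m,n}(w)|=O(e^{-2n\pi\rho/T})$ uniformly, and since $\sup_{z\in\Gamma_\rho}|f(z)|<\infty$ by the strip-analyticity, the desired bound follows at once.

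The main obstacle I anticipate lies in the bookkeeping for $m\geq 2$: one must verify that the explicit coefficients in the corrections, namely $\zeta(2r-2i)\,g^{(2i)}(t)/(2i)!$ for even $m=2r$ and $\zeta(2r-2i)\,g^{(2i+1)}(t)/(2i+1)!$ for odd $m=2r+1$, emerge \emph{exactly} when the Laurent expansion of $f$ at $z=t$ is matched against the Mittag--Leffler/Fourier expansion of $K_{m,n}$, whose Fourier coefficients produce the required $\zeta$-values through sums of the form $\sum_{k\geq 1}k^{-(2r-2i)}$. Once this identification is carried out for $m=1,2,3$ — and for $m=1$ it is the well-known cotangent-kernel argument for the trapezoidal rule applied to meromorphic periodic functions — the contour-deformation step and the exponential estimate on $\Gamma_\rho$ are routine, and the theorem follows.
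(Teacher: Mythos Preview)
The paper does not supply its own proof here; immediately after the statement it simply refers the reader to \cite{Sidi:1988:QMP}, \cite{Sidi:2013:CNQ}, and \cite{Sidi:2019:SSI-P2} for $m=1,2,3$ respectively. Your plan---reduce to $s=0$ via \eqref{eqalpha}, then represent the $s=0$ error through a cotangent-type kernel and deform the contour to the lines $\Im z=\pm\rho$---is the standard route to exponential convergence of trapezoidal-type rules on periodic meromorphic integrands, and it is in essence what those references carry out. Your reduction to $s=0$ is clean and correct.

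Two remarks. First, read the hypothesis as ``$f$ is $T$-periodic and \emph{meromorphic} in $D_\sigma$ with its only singularities the order-$m$ poles at $t+kT$'' (equivalently, $g$ extends analytically to $D_\sigma$); taken literally, ``$f$ analytic in $D_\sigma$'' contradicts the pole at $z=t$. You clearly intend this reading, but say so explicitly.

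Second, your description of $K_{m,n}$ for $m\ge 2$ as a derivative of the $m=1$ kernel is not quite the mechanism. The kernel that generates the raw sum $h\sum_{j=1}^{n-1}f(t+jh)$ is the \emph{same} cotangent kernel for every $m$; what changes with $m$ is the residue picked up at $z=t$ during the outward deformation, because the combined singularity of $f(z)\cot\!\big(n\pi(z-t)/T\big)$ there has order $m+1$. The Laurent coefficients of $\cot$ at the origin carry the values $\zeta(2k)$ (via Bernoulli numbers), and pairing them with the Taylor coefficients $g^{(i)}(t)/i!$ is exactly what reproduces $I[f]$ together with the correction terms in \eqref{eqT0meven}--\eqref{eqT0modd}. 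Once that residue computation is done---this is the ``bookkeeping'' you anticipate, and it is routine---the bound on $\Gamma_\rho$ follows from $\big|\cot\!\big(n\pi(z-t)/T\big)\mp i\big|=O(e^{-2n\pi\rho/T})$ uniformly on $\Im z=\pm\rho$, as you say.
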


Thus, practically speaking, we have
$$ \widehat{T}^{(s)}_{m,n}-I[f]=O(e^{-2n\pi \sigma/T})\quad\text{as $n\to\infty$}.$$
 For the proof of this result, see \cite{Sidi:1988:QMP} for $m=1$, \cite{Sidi:2013:CNQ} for $m=2$, and \cite{Sidi:2019:SSI-P2} for $m=3$.

\section{Variable transformations and singular integrals} \label{se3}
\setcounter{equation}{0} \setcounter{theorem}{0}
  Theorem \ref{thpsi} that follows  shows that the HFP integrals in \eqref{eq3} are invariant under a variable transformation $x=\psi(\xi)$ provided $g(x)$ and $\psi(\xi)$  have enough continuous derivatives.

\begin{theorem}\label{thpsi}
Let $m$ be a positive integer, and let
\be\label{eqpsi1}f(x) =\frac{g(x)}{(x-t)^m}, \quad a<t<b,\quad
g\in C^m(a,b),\quad g(x)\ \text{integrable at $x=a$ and $x=b$}, \ee
and let the variable transformation $x=\psi(\xi)$ be such that
\begin{gather}\label{eqpsi2}\psi: [\alpha,\beta]\rightarrow [a,b];\quad  \psi(\alpha)=a,\quad \psi(\beta)=b,\notag\\
\psi\in C^m[\alpha,\beta]; \quad \psi'(\xi)>0\quad\text{for \ $\alpha<\xi<\beta$}. \label{eqpsi22}\end{gather}
Then
\be\label{eqpsi3}
\intBar_\alpha^\beta {f}(\psi(\xi))\,\psi'(\xi)\,d\xi=\intBar^b_a {f}(x)\,dx
\quad\text{independent of $\psi(\xi)$.}\ee
\end{theorem}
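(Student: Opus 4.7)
The plan is to realize both integrals in \eqref{eqpsi3} as limits of the same regularization. The HFP on the right is
$$\intBar_a^b f(x)\,dx = \lim_{\epsilon\to 0^+}\bigl[J_f(\epsilon) - A(\epsilon)\bigr], \quad J_f(\epsilon)=\int_a^{t-\epsilon}f(x)\,dx + \int_{t+\epsilon}^b f(x)\,dx,$$
where $A(\epsilon)$ is a polynomial in negative odd powers of $\epsilon$ whose coefficients are computable from $g^{(k)}(t)$, $0\le k<m$. As a preliminary, I would verify that the transformed integrand $F(\xi)=f(\psi(\xi))\psi'(\xi)$ has the same HFP structure by writing
$$G(\xi):=(\xi-\tau)^m F(\xi) = g(\psi(\xi))\,\psi'(\xi)\,\left[\frac{\xi-\tau}{\psi(\xi)-\psi(\tau)}\right]^m,$$
and noting that $G\in C^m$ near $\tau$ since $\psi\in C^m[\alpha,\beta]$, $\psi'(\tau)>0$, and $g\in C^m(a,b)$ (the bracket is $C^m$ with nonzero value $1/\psi'(\tau)^m$ at $\xi=\tau$). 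Hence $\intBar_\alpha^\beta F(\xi)\,d\xi = \lim_{\eta\to 0^+}[J_F(\eta)-B(\eta)]$ is well-defined, with $B(\eta)$ built analogously from the $G^{(k)}(\tau)$.

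Next, apply the substitution $x=\psi(\xi)$ to each proper integral comprising $J_f(\epsilon)$. The symmetric $x$-exclusion $(t-\epsilon,t+\epsilon)$ pulls back to an \emph{asymmetric} $\xi$-exclusion $(\tau-\epsilon_-(\epsilon),\tau+\epsilon_+(\epsilon))$, with
$$\epsilon_\pm(\epsilon) = \frac{\epsilon}{\psi'(\tau)} \mp \frac{\psi''(\tau)}{2\,\psi'(\tau)^3}\,\epsilon^2 + O(\epsilon^3),$$
obtained from Taylor-expanding $\psi^{-1}$ at $t$. Setting $\eta=\epsilon/\psi'(\tau)$ and splitting off the two thin arcs (of length $O(\epsilon^2)$) between $\tau\pm\eta$ and $\tau\pm\epsilon_\pm$, one writes $J_f(\epsilon)=J_F(\eta)+\Delta(\epsilon)$, where $\Delta(\epsilon)$ is an explicit pair of small-arc integrals of $F$. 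Taylor-expanding $G$ at $\tau$ inside $F=G/(\xi-\tau)^m$ and integrating term by term decomposes $\Delta(\epsilon)=\Delta_{\mathrm{sing}}(\epsilon)+o(1)$, where $\Delta_{\mathrm{sing}}$ collects the divergent contributions in $\epsilon$.

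The crux is the bookkeeping identity
$$A(\epsilon) = B(\eta) + \Delta_{\mathrm{sing}}(\epsilon) + o(1) \qquad (\epsilon\to 0^+,\ \eta=\epsilon/\psi'(\tau)),$$
which immediately gives $J_f(\epsilon)-A(\epsilon) = J_F(\eta)-B(\eta)+o(1)$ and, on passing to the limit, yields \eqref{eqpsi3}. The main obstacle will be verifying this identity to all orders: it requires expressing the $G^{(k)}(\tau)$ in terms of the $g^{(j)}(t)$ via Fa\`a di Bruno applied to $\psi$, expanding $\epsilon_\pm(\epsilon)$ to high enough order that the asymmetry in the pull-back produces precisely the divergent pieces needed to reconcile the two symmetric regularizations, and treating the even and odd $m$ cases separately (in each parity the odd-power terms of the Taylor expansion of $G$ contribute zero to the symmetric subtraction but nontrivial pieces to $\Delta$, and these must compensate). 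For $m=1$ the whole identity collapses to $\log(\epsilon_-/\epsilon_+)=O(\epsilon)$, recovering the familiar invariance of the Cauchy principal value and serving as a useful sanity check before tackling general $m$.
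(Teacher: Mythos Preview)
Your strategy is sound and will work, but it is genuinely different from---and noticeably harder than---the paper's argument. The paper does \emph{not} try to match the two regularizations of the full integrands $f$ and $F$. Instead it subtracts the Taylor polynomial of $g$ at $t$ in the $x$-variable first, writing
\[
f(x)=w(x)+\sum_{i=0}^{m-1}\frac{g^{(i)}(t)}{i!}\,\frac{1}{(x-t)^{m-i}},
\]
with $w$ continuous on $(a,b)$ and integrable at the endpoints. The regular piece $w$ transforms by ordinary substitution, so the whole question reduces to the pure powers $1/(\psi(\xi)-\psi(\tau))^{k}$, $1\le k\le m$. These have elementary antiderivatives, so $\phi_k(\epsilon)=\int_\alpha^{\tau-\epsilon}+\int_{\tau+\epsilon}^\beta$ can be written down explicitly; one then checks directly that the divergent part of $\phi_k(\epsilon)$ is a polynomial in odd negative powers of $\epsilon$ with \emph{zero} finite part, so the HFP equals the $\psi$-independent boundary contribution. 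No Fa\`a di Bruno, no $G^{(k)}(\tau)$ in terms of $g^{(j)}(t)$, no high-order expansion of $\epsilon_\pm$---all of that collapses because the antiderivative of $\psi'/(\psi-\psi(\tau))^k$ is known in closed form. Your route instead keeps $G$ intact and pushes the computation into the bookkeeping identity $A(\epsilon)=B(\eta)+\Delta_{\mathrm{sing}}(\epsilon)+o(1)$; this is correct but is exactly the heavy algebra the paper sidesteps.

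One small technical correction: with $\psi\in C^m[\alpha,\beta]$ the divided difference $(\psi(\xi)-\psi(\tau))/(\xi-\tau)$ is only $C^{m-1}$ near $\tau$ (write it as $\int_0^1\psi'(\tau+s(\xi-\tau))\,ds$), so your bracket and hence $G$ are $C^{m-1}$, not $C^m$. This still suffices for the HFP of $F$ to be well-defined and for your Taylor expansions to the required order, but you should adjust the claim.
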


\noindent{\bf Remarks.}
\begin{enumerate}
\item
The differentiability conditions imposed on $g(x)$ and $\psi(\xi)$ seem to be minimal possible. Of course, the theorem is correct also when $g\in C^p(a,b)$ and $\psi\in C^q[\alpha,\beta]$, for all  $p,q\geq m$.
The result for $m=1$ (Cauchy Principal Value) is not new; see Gakhov \cite[p. 17]{Gakhov:1966:BVP}; we provide a proof of this case for completeness.

\item
Let us denote the transformed integrand by $F(\xi)$, that is,
\beq F(\xi)={f}(\psi(\xi))\,\psi'(\xi).\eeq
It is easy to see that
$F(\xi)$ has the same kind of singularity structure as $f(x)$; actually, $F(\xi)$  is of the form
\begin{gather} F(\xi)=\frac{G(\xi)}{(\xi-\tau)^m},\quad G(\xi)=\frac{g(\psi(\xi))}
{(\psi[\xi,\tau])^m}\,\psi'(\xi),\notag\\ \psi[\xi,\tau]=\frac{\psi(\xi)-\psi(\tau)}{\xi-\tau} \neq0\quad \text{for $\xi\neq\tau$},\quad \psi[\tau,\tau]=\psi'(\tau)>0,\label{eqpsi43}\end{gather}
$\tau\in (\alpha,\beta)$ being the unique solution of the equation $t=\psi(\xi)$ for $\xi$ since $\psi'(\xi)>0$ on $(\alpha, \beta)$.\footnote{Given $t\in(a,b)$, we can determine $\tau$ as the solution to the equation $\theta(\xi)=0$ with $\theta(\xi)=\psi(\xi)-t$, which can be achieved by  using  the Newton--Raphson  method, for example. For some of the variable transformations we present later in subsection \ref{sse43}, given $t$, $\tau$ is readily available, however.}
Consequently,  we also have
\be  \label{eqpsi44} G(\tau)=\frac{g(\psi(\tau))}{[\psi'(\tau)]^{m-1}}=\frac{g(t)}{[\psi'(\tau)]^{m-1}}. \ee
$G^{(i)}(\tau)$, $i\geq1$,  can be obtained  by differentiating $G(\xi)$ in \eqref{eqpsi43}  and letting $\xi\to\tau$. Thus,
\be
\label{eqpsi45} G'(\tau)=\frac{g'(t)}{[\psi'(\tau)]^{m-2}}+\bigg(1-\frac{m}{2}\bigg)
\frac{g(t)\psi''(\tau)}{[\psi'(\tau)]^{m}},\ee
for example.

\item
We recall that if $u(x)$ has a nonintegrable singularity at $x=t$ for $t\in(a,b)$ but is integrable on any subinterval of $(a,b)$ that does not contain $x=t$, then $\intBar^b_a u(x)\,dx$ is obtained
 by expanding
$$\phi(\epsilon)=\int^{t-\epsilon}_au(x)\,dx+\int^b_{t+\epsilon}u(x)\,dx, \quad \epsilon>0,$$
asymptotically as  $\epsilon\to0$,  discarding those terms that go to infinity, and retaining the limit  of the remaining terms, as  $\epsilon\to0$. (See Monegato \cite{Monegato:2009:DPA}, for example.)
\end{enumerate}

\begin{proof}
Let us express ${f}(x)$ in the form
$$
 {f}(x)= w(x)+\sum^{m-1}_{i=0}\frac{g^{(i)}(t)}{i!}\frac{1}{(x-t)^{m-i}},$$
where
$$w(x)=\frac{g(x)-\displaystyle\sum^{m-1}_{i=0}\frac{g^{(i)}(t)}{i!}(x-t)^i}{(x-t)^{m}}\quad\text{when $x\neq t$},\quad w(t)=\frac{g^{(m)}(t)}{m!}.$$
Clearly, $w(x)$ is continuous on $(a,b)$ and integrable at $x=a$ and $x=b$.

Now, for each $t\in(a,b)$, there is a unique $\tau\in(\alpha,\beta)$ such that $t=\psi(\tau)$, as already explained above.
Therefore,
\be\label{eqs2-1}
\intBar_\alpha^\beta {f}(\psi(\xi))\,\psi'(\xi)\,d\xi=
\intBar_\alpha^\beta w(\psi(\xi))\,\psi'(\xi)\,d\xi +\sum^{m-1}_{i=0}\frac{g^{(i)}(t)}{i!}\intBar_\alpha^\beta
\frac{\psi'(\xi)}{(\psi(\xi)-\psi(\tau))^{m-i}}\,d\xi.\ee

First, because $w(x)$ is  continuous on $(a,b)$ and integrable at $x=a$ and $x=b$,  we have that
$w(\psi(\xi))\psi'(\xi)$ is continuous on $(\alpha,\beta)$ and integrable at $\xi=\alpha$ and $\xi=\beta$. Consequently,
\be \label{eqg1}
\intBar_\alpha^\beta w(\psi(\xi))\,\psi'(\xi)\,d\xi =\int_\alpha^\beta w(\psi(\xi))\,\psi'(\xi)\,d\xi =
\int^b_a w(x)\,dx. \ee

Next, for  $k=1,2,\ldots,m,$ let us consider
\be\phi_{k}(\epsilon)=\int^{\tau-\epsilon}_\alpha\frac{\psi'(\xi)}{(\psi(\xi)-\psi(\tau))^{k}}\,d\xi+
\int^\beta_{\tau+\epsilon}\frac{\psi'(\xi)}{(\psi(\xi)-\psi(\tau))^{k}}\,d\xi.\ee
In what follows, we make repeated use of the facts that $\psi(\alpha)=a$, $\psi(\beta)=b$, and $\psi(\tau)=t$.

For $k=1$, we have
$$ \phi_1(\epsilon)=\log\bigg|\frac{\psi(\tau-\epsilon)-\psi(\tau)}{\psi(\alpha)-\psi(\tau)}\bigg|
+\log\bigg|\frac{\psi(\beta)-\psi(\tau)}{\psi(\tau+\epsilon)-\psi(\tau)}\bigg|,$$
hence
$$\phi_1(\epsilon)=\log\bigg|\frac{b-t}{a-t}\bigg| +\Lambda_1(\epsilon),$$
where
$$\Lambda_1(\epsilon)=\log\bigg|\frac{\psi(\tau-\epsilon)-\psi(\tau)}
{\psi(\tau+\epsilon)-\psi(\tau)}\bigg|.
$$
Application of L'H\^{o}spital's rule results in $\lim_{\epsilon\to0+}\Lambda_1(\epsilon)=0$. Thus,
\be\label{eqg2}\intBar_\alpha^\beta  \frac{\psi'(\xi)}{\psi(\xi)-\psi(\tau)}\,d\xi=\log\bigg|\frac{b-t}{a-t}\bigg|,\ee
independent of $\psi(\xi)$.

For  $k=2,3,\ldots,m,$  we have
\begin{align*}\phi_{k}(\epsilon)= -\frac{1}{k-1}\bigg\{ &\bigg[\frac{1}{(\psi(\tau-\epsilon)-\psi(\tau))^{k-1}}-\frac{1}{(\psi(\alpha)-\psi(\tau))^{k-1}}\bigg]\\
+&\bigg[\frac{1}{(\psi(\beta)-\psi(\tau))^{k-1}}-\frac{1}{(\psi(\tau+\epsilon)-\psi(\tau))^{k-1}}\bigg]\bigg\},
\end{align*}
hence
$$\phi_{k}(\epsilon)=\frac{1}{k-1}\bigg[\frac{1}{(a-t)^{k-1}}-\frac{1}{(b-t)^{k-1}}\bigg]+
\Lambda_{k}(\epsilon),$$
where

$$\Lambda_{k}(\epsilon)=\frac{1}{k-1}\bigg[\frac{1}{(\psi(\tau+\epsilon)-\psi(\tau))^{k-1}}
- \frac{1}{(\psi(\tau-\epsilon)-\psi(\tau))^{k-1}}\bigg].$$
Since  $2\leq k\leq m$, $\psi\in C^k [\alpha,\beta]$.
By the fact that $\psi'(\tau)>0$, there exists a function $\theta(\eta)\in C^{k-1}(I)$, where $I=(-\rho,\rho)$ with   $\rho\leq\min\{\beta-\tau,\tau-\alpha\}$,   such that
$$ \psi(\tau+\eta)-\psi(\tau)\equiv\eta\,
\theta(\eta),\quad \theta(0)=\psi'(\tau)>0.$$
Consequently,
$$ \Lambda_{k}(\epsilon)=\epsilon^{-k+1}[M_k(\epsilon)+(-1)^kM_k(-\epsilon)];\quad
M_k(\eta)=\frac{1}{k-1}\frac{1}{[\theta(\eta)]^{k-1}}\in C^{k-1}(I).$$
Expanding in a Taylor series about $\epsilon=0$, we obtain
\begin{align*}\Lambda_{k}(\epsilon)&=\epsilon^{-k+1}
\bigg\{\bigg[\sum^{k-2}_{i=0}c_i\epsilon^i+c^+_{k-1}\epsilon^{k-1}\bigg]
+(-1)^k\bigg[\sum^{k-2}_{i=0}(-1)^ic_i\epsilon^i+(-1)^{k-1}c^-_{k-1}\epsilon^{k-1}\bigg]\bigg\}\\
&=\sum^{k-2}_{i=0}[1+(-1)^{k+i}]c_i\epsilon^{-k+i+1}+ (c^+_{k-1}-c^-_{k-1})\epsilon^0,\end{align*}
where
$$c_i=\frac{M_k^{(i)}(0)}{i!}; \quad c^\pm_{k-1}=\frac{M_k^{(k-1)}(\eta^\pm)}{(k-1)!},\quad
\eta^\pm \ \text{between $0$ and $\pm\epsilon$}.$$
 The  sum $\sum^{k-2}_{i=0}[1+(-1)^{k+i}]c_i\epsilon^{-k+i+1}$ gives a linear combination of the following (odd) powers of $\epsilon$:
$$\epsilon^{-\hat{k}},\epsilon^{-\hat{k}+2},\epsilon^{-\hat{k}+4},\ldots,\epsilon^{-3},
\epsilon^{-1};\quad \hat{k}=\begin{cases}k-2&\text{if $k$ odd}\\ k-1&\text{if $k$ even}\end{cases}.$$
Since each of these powers tends to infinity as $\epsilon\to0+$, we discard them all.
The remaining term, namely,
$$ (c^+_{k-1}-c^-_{k-1})\epsilon^0= \bigg[\frac{M_k^{(k-1)}(\eta^+)}{(k-1)!}-\frac{M_k^{(k-1)}(\eta^-)}{(k-1)!}\bigg]\epsilon^0,$$
 tends to zero as $\epsilon\to0$ because $\lim_{\epsilon\to0}\eta^\pm=0$ and $M_k^{(k-1)}(\eta)$ is continuous in $I$. Thus, we have shown that $\Lambda_k(\epsilon)$ has zero contribution to $\intBar_\alpha^\beta \frac{\psi'(\xi)}{(\psi(\xi)-\psi(\tau))^{k}}\,d\xi$.
 Therefore, we have
\be \label{eqg3} \intBar_\alpha^\beta \frac{\psi'(\xi)}{(\psi(\xi)-\psi(\tau))^{k}}\,d\xi=
\frac{1}{k-1}\bigg[\frac{1}{(a-t)^{k-1}}-\frac{1}{(b-t)^{k-1}}\bigg],\quad k=2,3,\ldots,m,\ee
independent of $\psi(\xi)$.

Substituting \eqref{eqg1},  \eqref{eqg2}, and \eqref{eqg3} in \eqref{eqs2-1}, we obtain
\begin{align*} \intBar_\alpha^\beta  {f}(\psi(\xi))\,\psi'(\xi)\,d\xi&=
\int_a^b  w(x)\,dx + \frac{g^{(m-1)}(t)}{(m-1)!}\log\bigg|\frac{b-t}{a-t}\bigg|\\
&+  \sum^{m-2}_{i=0}\frac{g^{(i)}(t)}{i!}\frac{1}{m-i-1}
\bigg[\frac{1}{(a-t)^{m-i-1}}-\frac{1}{(b-t)^{m-i-1}}\bigg],
\end{align*}
independent of $\psi(\xi)$.
This completes the proof.
\end{proof}

Theorem \ref{thpsi} continues to hold when $f(x)=g(x)/\big|x-t\big|^s$, if $s\geq1$ and $s$ is not an  integer. It also holds when $s$ is an even integer since $\big|x-t\big|^s=(x-t)^s$ in this case, which we have already covered in Theorem \ref{thpsi}. It does {\em not} hold when $s\geq1$ and $s$ is an  odd integer,  however. These facts are the subject of the next two theorems, which we include for completeness.
These theorems can be proved using the technique employed  in proving Theorem \ref{thpsi}. We leave the proofs to the interested reader.

\begin{theorem}\label{thpsi1p}
Let $s>1$ such that  $s$ is not an integer, let $m=\lceil s \rceil$, and let
\be\label{eqpsi1a}f(x) =\frac{g(x)}{\big|x-t\big|^s}, \quad a<t<b,\quad
g\in C^m(a,b),\quad g(x)\ \text{integrable at $x=a$ and $x=b$}, \ee
and let the variable transformation $x=\psi(\xi)$ be as in Theorem \ref{thpsi}.
Then
\be\label{eqpsi3a}
\intBar_\alpha^\beta {f}(\psi(\xi))\,\psi'(\xi)\,d\xi=\intBar^b_a {f}(x)\,dx
\quad\text{independent of $\psi(\xi)$.}\ee
\end{theorem}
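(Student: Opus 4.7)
The plan is to adapt the proof of Theorem \ref{thpsi} by replacing the integer-power monomials $(x-t)^k$ with absolute-value powers $|x-t|^{s-i}$ and re-doing the antiderivative bookkeeping. First, I would Taylor-expand $g$ about $x=t$ to order $m-1$ and write
$$f(x) = w(x) + \sum_{i=0}^{m-1} \frac{g^{(i)}(t)}{i!}\frac{(x-t)^i}{|x-t|^s}, \quad w(x) = \frac{g(x) - \sum_{i=0}^{m-1} g^{(i)}(t)(x-t)^i/i!}{|x-t|^s}.$$
Because $g\in C^m(a,b)$, the Taylor remainder is $O((x-t)^m)$, and since $m-s>0$ (as $s$ is non-integer and $m=\lceil s \rceil$), $w$ extends continuously across $t$ and remains integrable at $x=a,b$. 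The ``regular'' piece $\intBar_\alpha^\beta w(\psi(\xi))\psi'(\xi)\,d\xi$ therefore coincides with $\int_a^b w(x)\,dx$ by the ordinary change-of-variables rule, already independent of $\psi$.

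The core of the proof is to show that for each $i\in\{0,1,\ldots,m-1\}$, $\intBar_\alpha^\beta \frac{(\psi(\xi)-\psi(\tau))^i \psi'(\xi)}{|\psi(\xi)-\psi(\tau)|^s}\,d\xi$ depends only on $a,b,t,s,i$. Following the strategy of Theorem \ref{thpsi}, I would study
$$\phi^{(i)}(\epsilon) = \int_\alpha^{\tau-\epsilon}\frac{(\psi(\xi)-\psi(\tau))^i \psi'(\xi)}{|\psi(\xi)-\psi(\tau)|^s}\,d\xi + \int_{\tau+\epsilon}^\beta\frac{(\psi(\xi)-\psi(\tau))^i \psi'(\xi)}{|\psi(\xi)-\psi(\tau)|^s}\,d\xi.$$
Splitting at $\tau$ and using that $\psi$ is strictly increasing, $|\psi(\xi)-\psi(\tau)|=\pm(\psi(\xi)-\psi(\tau))$, so each sub-integral admits an explicit antiderivative and a routine computation yields
$$\phi^{(i)}(\epsilon) = \frac{(b-t)^{i-s+1} + (-1)^i (t-a)^{i-s+1}}{i-s+1} - \Lambda^{(i)}(\epsilon),$$
with
$$\Lambda^{(i)}(\epsilon) = \frac{(\psi(\tau+\epsilon)-\psi(\tau))^{i-s+1} + (-1)^i(\psi(\tau)-\psi(\tau-\epsilon))^{i-s+1}}{i-s+1}.$$

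To finish, I would show that $\Lambda^{(i)}(\epsilon)$ contributes nothing in the HFP sense. As in Theorem \ref{thpsi}, write $\psi(\tau+\eta)-\psi(\tau) = \eta\,\theta(\eta)$ with $\theta \in C^{m-1}$ and $\theta(0) = \psi'(\tau) > 0$. Then $\Lambda^{(i)}(\epsilon) = \epsilon^{i-s+1}[P_+(\epsilon) + (-1)^i P_-(\epsilon)]/(i-s+1)$, where $P_\pm(\epsilon) = \theta(\pm\epsilon)^{i-s+1}$ is smooth in a neighbourhood of $\epsilon=0$. Taylor-expanding $P_\pm$ and exploiting the parity structure (the factor $(-1)^i$ kills either the even- or odd-index Taylor coefficients) produces an asymptotic expansion of $\Lambda^{(i)}(\epsilon)$ in powers $\epsilon^{i-s+1+2\ell}$, $\ell=0,1,2,\ldots$. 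Here the non-integrality of $s$ is crucial: none of these exponents equals $0$, so the negative-exponent terms diverge and are discarded by the definition of the HFP, while the positive-exponent terms vanish as $\epsilon\to 0^+$. Summing the resulting $\psi$-independent values over $i$ and adding $\int_a^b w(x)\,dx$ yields \eqref{eqpsi3a}.

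The main obstacle I expect is the parity-tracking argument that guarantees no $\epsilon^0$ term survives in $\Lambda^{(i)}(\epsilon)$, since this is the one place where the hypothesis ``$s$ is not an integer'' is truly essential. It also makes transparent why the companion theorem fails for odd integer $s$: in that case some exponent $i-s+1+2\ell$ can equal $0$, producing a $\psi$-dependent constant that persists in the HFP limit and destroys the invariance.
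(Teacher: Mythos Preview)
Your proposal is correct and follows exactly the route the paper indicates: the paper does not actually prove this theorem but explicitly states that it ``can be proved using the technique employed in proving Theorem~\ref{thpsi}'' and leaves it to the reader, and your adaptation of that technique---Taylor-expand $g$ about $t$, handle the regular remainder $w$ by ordinary change of variables, integrate each elementary singular piece explicitly, and use the $\theta$-expansion to show that $\Lambda^{(i)}(\epsilon)$ contributes nothing because $s\notin\mathbb{Z}$ forces every exponent to be nonzero---is precisely what is intended. One small imprecision worth fixing: for odd $i$ the parity cancellation in $P(\epsilon)+(-1)^iP(-\epsilon)$ kills the even-index Taylor coefficients, so the surviving exponents are $i-s+2+2\ell$ rather than $i-s+1+2\ell$; this does not affect the argument, since in either case the exponents are non-integers and hence never equal to $0$.
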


\begin{theorem}\label{thpsi1s}
Let $m\geq1$ be an odd integer and let
\be\label{eqpsi1b}f(x) =\frac{g(x)}{\big|x-t\big|^m}, \quad a<t<b,\quad
g\in C^m(a,b),\quad g(x)\ \text{integrable at $x=a$ and $x=b$}, \ee
and let the variable transformation $x=\psi(\xi)$ be as in Theorem \ref{thpsi}.
Then, in general,
\be\label{eqpsi3b}
\intBar_\alpha^\beta {f}(\psi(\xi))\,\psi'(\xi)\,d\xi\not=\intBar^b_a {f}(x)\,dx.
\ee
For example, when $m=1$, we have
\be \intBar_\alpha^\beta {f}(\psi(\xi))\,\psi'(\xi)\,d\xi=\intBar^b_a f(x)\,dx- 2g(t)\log\psi'(\tau),\ee
where
\be \intBar^b_a f(x)\,dx=\int^b_a \frac{g(x)-g(t)}{\big|x-t\big|}\,dx+g(t)
\log\big|(a-t)(b-t)\big|.\ee
\end{theorem}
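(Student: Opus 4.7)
The plan is to imitate the structure of the proof of Theorem \ref{thpsi} but work out how the symmetry breaks when the denominator is $|x-t|^m$ with $m$ odd. I will concentrate on the $m=1$ identity, since that is the only explicit formula asserted and it already exhibits the mechanism behind the inequality in the general odd case.

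First I would split off the regular part of the integrand. Writing
\[
f(x)=\frac{g(x)-g(t)}{|x-t|}+\frac{g(t)}{|x-t|}=:w(x)+\frac{g(t)}{|x-t|},
\]
the function $w(x)$ is bounded on $(a,b)$ (since $g\in C^1(a,b)$) and integrable at the endpoints, so $\int_a^b w(x)\,dx$ exists as a regular integral. Representing $\intBar_a^b f(x)\,dx$ via the $\epsilon$-limit prescription recalled in Remark~3 after Theorem~\ref{thpsi} and computing
\[
\int_a^{t-\epsilon}\frac{dx}{t-x}+\int_{t+\epsilon}^b\frac{dx}{x-t}
=\log(t-a)+\log(b-t)-2\log\epsilon,
\]
and discarding the divergent $-2\log\epsilon$, yields
\[
\intBar_a^b\frac{dx}{|x-t|}=\log|(a-t)(b-t)|,
\]
which gives the stated representation of $\intBar_a^b f(x)\,dx$.

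Next I would repeat this analysis on the $\xi$-side. With $\tau$ the unique preimage of $t$ under $\psi$, decompose
\[
F(\xi):=f(\psi(\xi))\psi'(\xi)=\frac{g(\psi(\xi))-g(t)}{|\psi(\xi)-\psi(\tau)|}\psi'(\xi)+\frac{g(t)\psi'(\xi)}{|\psi(\xi)-\psi(\tau)|}.
\]
The first summand is a bounded multiple of $\psi'(\xi)$ on $(\alpha,\beta)$ (because $(g(\psi(\xi))-g(t))/(\psi(\xi)-\psi(\tau))$ is bounded) and so the change of variables $x=\psi(\xi)$ in the ordinary Riemann sense gives
\[
\int_\alpha^\beta\frac{g(\psi(\xi))-g(t)}{|\psi(\xi)-\psi(\tau)|}\psi'(\xi)\,d\xi=\int_a^b\frac{g(x)-g(t)}{|x-t|}\,dx.
\]
For the remaining singular piece I would compute
\[
\phi(\epsilon)=\int_\alpha^{\tau-\epsilon}\frac{\psi'(\xi)}{\psi(\tau)-\psi(\xi)}\,d\xi+\int_{\tau+\epsilon}^\beta\frac{\psi'(\xi)}{\psi(\xi)-\psi(\tau)}\,d\xi
\]
explicitly by antidifferentiation, obtaining
\[
\phi(\epsilon)=\log|(a-t)(b-t)|-\log\bigl(\psi(\tau)-\psi(\tau-\epsilon)\bigr)-\log\bigl(\psi(\tau+\epsilon)-\psi(\tau)\bigr).
\]
Using $\psi(\tau\pm\epsilon)-\psi(\tau)=\pm\epsilon\,\psi'(\tau)+O(\epsilon^{2})$ with $\psi'(\tau)>0$, each of the last two logarithms equals $\log\epsilon+\log\psi'(\tau)+o(1)$; discarding the divergent $-2\log\epsilon$ and passing to the limit gives
\[
\intBar_\alpha^\beta\frac{\psi'(\xi)}{|\psi(\xi)-\psi(\tau)|}\,d\xi=\log|(a-t)(b-t)|-2\log\psi'(\tau).
\]
Adding the two pieces yields the claimed identity
\[
\intBar_\alpha^\beta f(\psi(\xi))\,\psi'(\xi)\,d\xi=\intBar_a^b f(x)\,dx-2g(t)\log\psi'(\tau),
\]
and choosing any $\psi$ with $\psi'(\tau)\neq 1$ (together with $g(t)\neq 0$) demonstrates \eqref{eqpsi3b}.

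The step I expect to require the most care is the last one: in the proof of Theorem~\ref{thpsi} the analogous term $\Lambda_1(\epsilon)=\log\bigl|\frac{\psi(\tau-\epsilon)-\psi(\tau)}{\psi(\tau+\epsilon)-\psi(\tau)}\bigr|$ tended to $0$ because the $\psi'(\tau)$ factors cancelled \emph{inside} a single logarithm of a ratio. With $|\,\cdot\,|$ in the denominator the two boundary contributions both come in with the same sign, so the $\log\psi'(\tau)$ terms add rather than cancel, and one must keep careful track of this sign combinatorics to ensure the residual is exactly $-2\log\psi'(\tau)$ and not, e.g., $0$ or $-\log[\psi'(\tau)]^{2}$ with the wrong prefactor. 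The same mechanism —  odd powers of $|x-t|$ produce an uncancelled $\log\psi'(\tau)$ via the boundary expansions, while even powers of $|x-t|=(x-t)^{2k}$ reduce to the setting of Theorem~\ref{thpsi} — is what underlies the general non-invariance claim \eqref{eqpsi3b} for arbitrary odd $m$.
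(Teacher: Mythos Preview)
Your argument is correct and is precisely the route the paper intends: the paper does not spell out a proof of this theorem but explicitly says it ``can be proved using the technique employed in proving Theorem~\ref{thpsi}'' and leaves the details to the reader, which is exactly what you have done. Your computation of $\phi(\epsilon)$ for the $|x-t|$ kernel, the identification of the uncancelled $-2\log\psi'(\tau)$ term (in contrast to the cancellation inside $\Lambda_1(\epsilon)$ in Theorem~\ref{thpsi}), and the resulting discrepancy formula all check out.
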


\section{Periodization of $F(\xi)$ via variable transformations}\label{se4}
\setcounter{equation}{0} \setcounter{theorem}{0}
\subsection{Preliminaries}
In view of  Theorem \ref{thpsi}, with $\psi(\xi)$ as in \eqref{eqpsi22}, we have
\be\label{eqpsi33} I[f]=\intBar^b_a f(x)\,dx=\intBar_\alpha^\beta F(\xi)\,d\xi=I[F], \quad F(\xi)=f(\psi(\xi))\,\psi'(\xi).\ee
We now aim to choose $\psi(\xi)$ so as to  periodize the transformed integrand $F(\xi)$ in the sense that
$$F^{(i)}(\alpha)= F^{(i)}(\beta),\quad i=0,1,\ldots,q-1,\quad \text{for some integer  $q$}.$$
The easiest way of achieving this goal is  by choosing $\psi(\xi)$ such that
\be \label{eqyy}\psi^{(i)}(\alpha)= \psi^{(i)}(\beta)=0,\quad i=1,2,\ldots,r,\quad \text{for some integer  $r$.}\ee
Provided $r$ is sufficiently large,  this will periodize  $F(\xi)$ in the sense that
\be \label{eqxx} F^{(i)}(\alpha)=0= F^{(i)}(\beta),\quad i=0,1,\ldots,q-1,\quad \text{for some integer  $q$.}\ee
To demonstrate this point, let us look at the following examples.

\begin{example}{\em
In case $f(x)$ is sufficiently differentiable on $[a,b]\setminus\{t\}$, \eqref{eqyy} will force \eqref{eqxx} to be valid with
$q=r$. Let us illustrate this for $r=1,2,3$:
We start with
\begin{align} F&=f(\psi)\psi',\label{equ1}\\
F'&=f(\psi)\psi''+f'(\psi)(\psi')^2,\label{equ2}\\ F''&=f(\psi)\psi'''+3f'(\psi)\psi'\psi''+f''(\psi)(\psi')^3. \label{equ3}
\end{align}
\begin{itemize}
\item For $r=1$, that $F(\alpha)=F(\beta)=0$ is obvious  by \eqref{equ1}  and by $\psi'(\alpha)=\psi'(\beta)=0$ in \eqref{eqyy}.
\item
For  $r=2$, that $F^{(i)}(\alpha)=F^{(i)}(\beta)=0$, $i=0,1,$ is obvious  by \eqref{equ1}--\eqref{equ2}  and by $\psi^{(i)}(\alpha)=\psi^{(i)}(\beta)=0$, $i=1,2,$ in \eqref{eqyy}.
\item
For  $r=3$, that $F^{(i)}(\alpha)=F^{(i)}(\beta)=0,$ $i=0,1,2,$ is obvious  by \eqref{equ1}--\eqref{equ3}  and by $\psi^{(i)}(\alpha)=\psi^{(i)}(\beta)=0$, $i=1,2,3,$ in \eqref{eqyy}.
\end{itemize}
It is now easy to see that, if $\psi(\xi)$ satisfies \eqref{eqyy} with some $r$,
we have that \eqref{eqxx} is valid with $q=r$.\\
Clearly,   $q=\infty$ when $r=\infty$.
} \hfill$\square$
\end{example}
\begin{example}\label{ex42}{\em
In case $f(x)$ is not continuous or differentiable at $x=a$ and/or $x=b$ but is integrable there, we can still use the variable transformation $x=\psi(\xi)$ satisfying \eqref{eqyy} and achieve \eqref{eqxx}, but with some $q<r-1$.  To illustrate this point, let us  consider $f(x)=(x-a)^c u_a(x)=(b-x)^c u_b(x)$, where $-1<c<0$ and $u_a(x)$ and $u_b(x)$ are  such that $u_a(a)\neq0$ and $u_b(b)\neq 0$ and are sufficiently differentiable on $[a,b)\setminus\{t\}$ and $(a,b]\setminus\{t\}$, respectively. [The fact that $c>-1$ guarantees  that $f(x)$ is
integrable at $x=a$ and $x=b$, as required in \eqref{eq3}, even though $f(x)$ and all its derivatives are  unbounded at $x=a$ and $x=b$ when $c<0$.]
Now, assuming that $\psi^{(r+1)}(\alpha)\neq0$ and $\psi^{(r+1)}(\beta)\neq0$,
$$\psi(\xi)-\psi(\alpha)\sim \frac{\psi^{(r+1)}(\alpha)}{(r+1)!}(\xi-\alpha)^{r+1}
\quad\text{and}\quad \psi'(\xi)\sim \frac{\psi^{(r+1)}(\alpha)}{r!}(\xi-\alpha)^{r}\quad
\text{as $\xi\to\alpha+$,}$$
and
$$\psi(\xi)-\psi(\beta)\sim \frac{\psi^{(r+1)}(\beta)}{(r+1)!}(\xi-\beta)^{r+1}
\quad\text{and}\quad \psi'(\xi)\sim \frac{\psi^{(r+1)}(\beta)}{r!}(\xi-\beta)^{r}\quad
\text{as $\xi\to\beta-$}.$$
Therefore, $F(\xi)$ satisfies the asymptotic equalities
$$ F(\xi)\sim \begin{cases} M (\xi-\alpha)^\rho&\text{as $\xi\to\alpha+$} \\
N(\beta-\xi)^\rho&\text{as $\xi\to\beta-$}\end{cases},\quad \text{for some $M,N\neq0$}, \quad \rho=c(r+1)+r.$$
 Thus, $F(\xi)$ satisfies \eqref{eqxx}  with
$q=\lceil \rho\rceil$, and $q\geq1$ provided $r\geq(1-c)/(1+c)$, which can be accomplished by choosing $\psi(\xi)$ appropriately.
 When $c=-1/2$, for example, \eqref{eqxx} holds with
$q=\lceil \tfrac{r-1}{2} \rceil$.\\
Clearly,  $q=\infty$ when $r=\infty$ in this example too.
} \hfill$\square$
\end{example}

\noindent {\bf Remark.}
Before going on, we would like to emphasize that variable transformations as described here will be useful only if $f(x)$ is {\em integrable} at the endpoints $x=a$ and $x=b$. If $f(x)$ has {\em nonintegrable} singularities at $x=a$ or $x=b$, then the transformed integrand $F(\xi)$ has {\em worse} singularities at $\xi=\alpha$ or $\xi=\beta$. We can verify this by letting $c<-1$ in Example \ref{ex42}, which causes $\rho<c$; for example, with $c=-3/2$, we have $\rho=c-r/2$.

In subsection \ref{sse43}, we give examples of $\psi(\xi)$ with  both $r$ finite and $r=\infty$.

\subsection{Consequences of periodization}\label{sse42}
The periodization of $F(\xi)$  as in \eqref{eqxx} has important consequences, which we discuss next.
 With $\T=\beta-\alpha$, let us  denote  the $\T$-periodic extension of $F(\xi)$ by  ${\cal F}(\xi)$. Thus,
\be \label{eqcalf} {\cal F}(\xi)=F(\xi)\quad  \text{if $\xi\in(\alpha,\beta)$}\quad\text{and}\quad
{\cal F}(\xi+k\T)={\cal F}(\xi),\quad k=0,\pm1,\pm2,\ldots. \ee
As a result, for arbitrary $i=0,1,\ldots,$
\be {\cal F}^{(i)}(\xi)=\begin{cases} F^{(i)}(\xi)\quad  &\text{if $\xi\in(\alpha,\beta)$,}\\
F^{(i)}(\xi-\T)\quad  &\text{if $\xi\in(\beta,\beta+\T)$.}\end{cases} \ee
Therefore, with $\epsilon>0$ and small,
\be{\cal F}^{(i)}(\beta-\epsilon)=F^{(i)}(\beta-\epsilon),\quad  {\cal F}^{(i)}(\beta+\epsilon)=F^{(i)}(\alpha+\epsilon),\ee
which, upon letting $\epsilon\to0$, gives

\be  {\cal F}^{(i)}(\beta-)=F^{(i)}(\beta),\quad {\cal F}^{(i)}(\beta+)=F^{(i)}(\alpha),\quad
i=0,1,\ldots, q-1, \ee
which, by \eqref{eqxx}, leads to
\be \label{eqFq} {\cal F}^{(i)}(\beta-)={\cal F}^{(i)}(\beta+)=0,\ \ i=0,1,\ldots,q-1
\ \ \Rightarrow\ \ {\cal F}\in C^{q-1}\ \text{in a neighborhood of $\beta$.}\ee
Thus, ${\cal F}(\xi)$,  the $\T$-periodic extension of $F(\xi)$ that is defined for $\xi\in[\alpha,\beta]$  is in $C^{q-1}(\mathbb{R}_\tau)$, where, analogous to \eqref{eqRt},
\beq \label{eqrtau}
\T=\beta-\alpha,\quad \mathbb{R}_\tau=\mathbb{R}\setminus\{\tau+ k\T\}_{k=-\infty}^\infty.\eeq

\noindent {\bf Remark.}
It is clear from the examples we have given above that $q$ increases with $r$. Because $r$ is at our disposal, we may choose $\psi(\xi)$ such that \eqref{eqyy} is satisfied with $r$  as large as we wish, including $r=\infty$. Thus, we can also make $q$ as large  as we wish, including $q=\infty$, forcing  ${\cal F}(\xi)$ to be as smooth as we wish.

\subsection{Examples of periodizing variable transformations} \label{sse43}
Variable transformations were originally developed and used for enhancing the accuracy of the trapezoidal rule approximations to finite-range  integrals  $\int^b_a\phi(x)\,dx$ defined in the regular sense.\footnote{Here we must  emphasize that, in this work, we are  using  variable transformations for the sole purpose of achieving \eqref{eqxx}.}   There are different types of variable transformations; for surveys of these and their applications in numerical integration, see
 Beckers and  Haegemans \cite{Beckers:1992:TIL}, Elliott \cite{Elliott:1998:STT},
  Monegato and  Scuderi \cite{Monegato:1999:NIF}, and
  Sidi \cite{Sidi:1993:NVT}, \cite{Sidi:2006:ECP}, \cite{Sidi:2007:NCS},  \cite{Sidi:2008:FEC}, Yun \cite{Yun:2001:ETG}, and Yun and Kim \cite{Yun:2003:NST}, for example.
Because we only wish to achieve \eqref{eqyy} in this work,
here we will mention, without going into much detail,  only a few of those that have simple representations.

For all the transformations we mention next, we use the standard intervals  $(a,b)=(0,1)$ and  $(\alpha,\beta)=(0,1)$, and we will denote these by $\hpsi(\xi)$ to emphasize this fact.\footnote{In case $(a,b)\neq(0,1)$,  the variable transformation $\psi:[a,b]\to[0,1]$ is simply
$\psi(\xi)=a+(b-a)\hpsi(\xi)$, hence $\intBar^b_af(x)\,dx=\intBar^1_0F(\xi)\,d\xi$, with
$F(\xi)=f(\psi(\xi))\psi'(\xi)=(b-a)f(a+(b-a)\hpsi(\xi))\hpsi'(\xi)$.}

\begin{align}
&\text{Korobov \cite{Korobov:1963:NTM}:} &&\hpsi(\xi)=\frac{\theta(\xi)}{\theta(1)},\quad \theta(\xi)=\int^\xi_0 [u(1-u)]^{p-1}\,du.\label{eqc1}\\
&\text{Sidi \cite{Sidi:1993:NVT}, \cite{Sidi:2008:FEC}:} &&\hpsi(\xi)=\frac{\theta(\xi)}{\theta(1)},\quad \theta(\xi)=\int^\xi_0 (\sin{\pi u})^{p-1}\,du. \label{eqc2}\\
&\text{Pr\"{o}sdorf and Rathsfeld \cite{Prossdorf:1991:QMS}:} &&\hpsi(\xi)=\frac{\xi^{p}}{\xi^{p}+(1-\xi)^{p}}.\label{eqc3} \\
&\text{Sidi \cite{Sidi:2007:NCS}:}&& \hpsi(\xi)= \frac{\displaystyle\bigg(\sin\frac{\pi \xi}{2}\bigg)^{p}}{\displaystyle\bigg(\sin\frac{\pi \xi}{2}\bigg)^{p}+\bigg(\cos\frac{\pi \xi}{2}\bigg)^{p}}.\label{eqc4} \\
 &\text{Sag and Szekeres \cite{Sag:1964:NEH}:}
&&\hpsi(\xi)=\frac{1}{2}\tanh\bigg(c\bigg(\frac{1}{1-\xi}-\frac{1}{\xi}\bigg)\bigg)+\frac{1}{2},\quad c>0.\label{eqc5}
\end{align}
Of course, by Theorem \ref{thpsi}, with each of the transformations in \eqref{eqc1}--\eqref{eqc5}, we have
$$ \intBar^1_0 f(x)\,dx=\intBar^1_0f(\hpsi(\xi))\,\hpsi'(\xi)\,d\xi, \quad
f(x)=\frac{g(x)}{(x-t)^m}\,dx,\quad 0<t<1.$$

With integer $p$, for all the transformations in \eqref{eqc1}--\eqref{eqc4}, we have

$$ \hpsi^{(i)}(0)=\hpsi^{(i)}(1)=0,\quad i=1,\ldots, r,$$
with (i)\,$r=p-1$  in \eqref{eqc1}--\eqref{eqc4},
and (ii)\,$r=\infty$ in \eqref{eqc5}.
\medskip

\noindent{\bf Remarks.}
\begin{enumerate}
\item
Note that $p$ in \eqref{eqc1}--\eqref{eqc4} does not have to be chosen as an integer. It can be chosen as an integer or otherwise so as to optimize the quality of  the quadrature approximations for regular integrals.
 \item
 All five variable transformations we just mentioned satisfy
 $$\hpsi(1-\xi)=1-\hpsi(\xi)\quad \text{and}\quad \hpsi'(1-\xi)=\hpsi'(\xi),\quad
 \xi\in[0,1].$$
 Clearly, $\hpsi'(\xi)$ are symmetric with respect to $\xi=1/2$, that is,
 $$\hpsi(\tfrac{1}{2})=\tfrac{1}{2};\quad \hpsi'(\tfrac{1}{2})=0,\quad \hpsi'(\tfrac{1}{2}+\epsilon)=\hpsi'(\tfrac{1}{2}-\epsilon).$$
\item
For $\hpsi(\xi)$ in \eqref{eqc3}--\eqref{eqc5}, we can obtain    $\tau$ from $\psi(\tau)=t$ analytically. In all three cases, $\tau$ is the solution to the equation
$$ \frac{\rho(\tau)}{\rho(\tau)+1}=t \quad\Rightarrow\quad \rho(\tau)=\frac{t}{1-t},$$ where
\begin{align}
\text{for \eqref{eqc3}:}\quad &\rho(\tau)=\bigg(\frac{\tau}{1-\tau}\bigg)^p, \\
\text{for \eqref{eqc4}:}\quad & \rho(\tau)=\bigg(\tan\frac{\pi\tau}{2}\bigg)^p, \\
\text{for \eqref{eqc5}:}\quad &\rho(\tau)=\exp\bigg[2c\bigg(\frac{1}{1-\tau}-\frac{1}{\tau}\bigg)\bigg].
\end{align}
Then we have the following:
\begin{align}
\text{For \eqref{eqc3}:}\quad \tau&=\frac{t^{1/p}}{t^{1/p}+(1-t)^{1/p}}. \label{eqtau1}\\
\text{For \eqref{eqc4}:}\quad \tau&=\frac{2}{\pi}\tan^{-1}\lambda;\quad \lambda=\bigg(\frac{t}{1-t}\bigg)^{1/p}.\\
\text{For \eqref{eqc5}:}\quad \tau&=\begin{cases}\displaystyle\frac{\sqrt{\lambda^2+4}+\lambda-2}{2\lambda}\quad  \text{if $\lambda>0$}\\ \vspace{-3mm}\\ \displaystyle\frac{2}{\sqrt{\lambda^2+4}-\lambda+2}\quad \text{if $\lambda\leq0$}\end{cases};
\quad
\lambda=\frac{1}{2c}\log\bigg(\frac{t}{1-t}\bigg).
\end{align}
\end{enumerate}

\section{PVTSI$^{(m)}$: Development of numerical quadrature formulas via periodization}\label{se5}
\setcounter{equation}{0} \setcounter{theorem}{0}
We go back to the HFP integrals $\intBar^b_af(x)\,dx$  described in Section \ref{se1}, assuming that $f(x)$ is as in \eqref{eq3};  therefore,  $f\in C^\infty((a,b)\setminus\{t\})$ since   $g\in C^\infty(a,b)$. Let us periodize $f(x)$ via a suitable variable transformation $x=\psi(\xi)$, where
\be \label{eqpsp}\psi\in C[\alpha,\beta],\ \psi\in C^\infty(\alpha,\beta);\ \ \psi(\alpha)=a,\ \psi(\beta)=b;\ \
  \psi'(\xi)>0\ \text{for $\xi\in(\alpha,\beta)$},\ee
as described in the preceding section, such that the transformed integrand
$F(\xi)=f(\psi(\xi))\,\psi'(\xi)$ satisfies \eqref{eqxx},
which is possible by a judicious choice of $\psi(\xi)$, as we have already seen.
Then we have $I[f]=I[F]$ by Theorem \ref{thpsi}, and

\be \label{eqI[f]}I[F]=\intBar^\beta_\alpha F(\xi)\,d\xi=\intBar^{\tau+\T}_\tau {\cal F}(\xi)\,d\xi,\quad \T=\beta-\alpha,\ee
 the function ${\cal F}(\xi)$ being    the $\T$-periodic extension of
$F(\xi)$ introduced in subsection \ref{sse42}.  Our aim is to develop numerical quadrature formulas specifically for the integral $\intBar^{\tau+\T}_\tau {\cal F}(\xi)\,d\xi$ to compute $I[F]$ as given in \eqref{eqI[f]}. We will achieve this via Theorem \ref{thA} that is proved in the appendix. For this, we need to study in detail the analytical properties of ${\cal F}(\xi)$ in the interval $(\tau,\tau+\T)$.

\begin{enumerate}

\item
First, note that both endpoints $\xi=\tau$ and $\xi=\tau+\T$ of the integration interval $J=(\tau,\tau+\T)$ are points of singularity of ${\cal F}(\xi)$ and ${\cal F}\in C^\infty(J\setminus\{\beta\})$.
Next, at  $\xi=\beta$, which is in the interior of $J$, the function
${\cal F}(\xi)$ is continuous and has $q-1$ continuous derivatives, and
${\cal F}^{(i)}(\beta)=0,$ $i=0,1,\ldots,q-1,$ by \eqref{eqFq}. We now assume, without loss of generality,  that ${\cal F}^{(q)}(\xi)$ is absolutely integrable through $\xi=\beta$, which can be achieved by choosing $\psi(\xi)$ appropriately. To have a visual idea about what we have just explained, see the Figures \ref{fig1}, \ref{fig2}, and \ref{fig3}.

 \item
 Let us  choose   $\sigma$ and $\rho$ such that
$$\tau<\sigma<\beta<\rho<\tau+\T,$$
and let
$$J_1=(\tau,\sigma)\cup(\rho,\tau+\T),\quad J_2=[\sigma,\rho].$$
Then ${\cal F}\in C^\infty(J_1)$ and ${\cal F}\in C^{q-1}(J_2)$,
${\cal F}^{(q)}(\xi)$ being absolutely integrable in $J_2$.

\item
We are now interested in determining the asymptotic expansions of ${\cal F}(\xi)$  as $\xi\to\tau+$ and as $\xi\to(\tau+\T)-$, recalling from \eqref{eqpsi43} that $F(\xi)$ can be expressed as $F(\xi)=G(\xi)/(\xi-\tau)^m$ with $G\in C^\infty(\alpha,\beta)$.

\begin{itemize}
\item
First, we have that
$${\cal F}(\xi)=F(\xi)=\frac{G(\xi)}{(\xi-\tau)^m}\quad \text{if $\xi\in(\alpha,\beta)$}.$$
Since $\tau\in(\alpha,\beta)$, expanding $G(\xi)$ in a Taylor series about $\xi=\tau$, we have that
$${\cal F}(\xi)\sim \sum^\infty_{i=0}\frac{G^{(i)}(\tau)}{i!}(\xi-\tau)^{i-m}\quad \text{as $\xi\to\tau$},$$ which we write in the form
\be\label{eqF111} {\cal F}(\xi)\sim \frac{G^{(m-1)}(\tau)}{(m-1)!} (\xi-\tau)^{-1}+ \sum^\infty_{\substack{i=0\\ i\neq m-1}}\frac{G^{(i)}(\tau)}{i!}(\xi-\tau)^{i-m}\quad \text{as $\xi\to\tau$}.\ee

\item
Next, by $\T$-periodicity of ${\cal F}(\xi)$,
 we have that
$${\cal F}(\xi)=F(\xi-\T)=\frac{G(\xi-\T)}{(\xi-\T-\tau)^m}
\quad \text{if $\xi\in (\beta, \beta+\T)$.}$$
Expanding $G(\xi-\T)$ in a Taylor series about $\xi=\tau+\T$, which is in
 $(\beta, \beta+\T)$, we have the asymptotic expansion

$$ {\cal F}(\xi)\sim \sum^\infty_{i=0}\frac{G^{(i)}(\tau)}{i!}(\xi-\tau-\T)^{i-m}
\quad\text{as $\xi\to \tau+\T$}, $$ which we write in the form
\begin{multline}\label{eqF222}{\cal F}(\xi)\sim -\frac{G^{(m-1)}(\tau)}{(m-1)!} (\tau+\T-\xi)^{-1}\\
+\sum^\infty_{\substack{i=0\\ i\neq m-1}}(-1)^{i-m}\frac{G^{(i)}(\tau)}{i!}(\tau+\T-\xi)^{i-m}
\quad\text{as $\xi\to \tau+\T$}.\end{multline}
\end{itemize}

\item
Thus, Theorem \ref{thA} applies to $\intBar^{\tau+\T}_\tau {\cal F}(\xi)\,d\xi$  with $h={\cal T}/n$, and we have, as $h\to0$,
\beq \label{eqasd} h\sum^{n-1}_{j=1} {\cal F}(\tau+jh)\sim I[F]+R_{q}(h)+\sum^\infty_{\substack{i=0\\ i\neq m-1}}[1+(-1)^{i-m}]\,\frac{G^{(i)}(\tau)}{i!}\zeta(-i+m)h^{i-m+1},\eeq
where $\zeta(z)$ is the Riemann Zeta function and
\beq \label{eqasd1}
R_{q}(h)=O(h^{q})\quad\text{as $h\to0$}. \eeq
Observe that the contributions of the terms involving $(\xi-\tau)^{-1}$ and $(\tau+\T-\xi)^{-1}$ that appear in  \eqref{eqF111} and \eqref{eqF222} cancel each other.
\end{enumerate}

Now the infinite sum in \eqref{eqasd} contains only those terms with  $i-m$ an even integer, which   involve  the zeta function $\zeta(2s)$, $s$ being an integer,  positive, negative, or zero.
Invoking  the fact that $\zeta(-2k)=0$ for $k=1,2,\ldots,$  we see that this (infinite) sum reduces further to a {\em finite} sum.  We summarize the end result in the  following theorem:

\begin{theorem}\label{thwww}
Depending on whether $m$ is even or  odd, \eqref{eqasd} assumes the following forms:\\
\noindent{1. For $m=2r$, $r=1,2,\ldots,$}
\be \label{eqmeven}h\sum^{n-1}_{j=1} {\cal F}(\tau+jh)=I[F]+
2\sum^r_{i=0}\frac{G^{(2i)}(\tau)}{(2i)!}\,\zeta(2r-2i)\,h^{-2r+2i+1}+O(h^{q})
\quad \text{as $h\to0$}.\ee
\noindent{2. For $m=2r+1$, $r=0,1,\ldots,$}
\be \label{eqmodd}h\sum^{n-1}_{j=1} {\cal F}(\tau+jh)=I[F]+
2\sum^r_{i=0}\frac{G^{(2i+1)}(\tau)}{(2i+1)!}\,\zeta(2r-2i)\,h^{-2r+2i+1}+O(h^{q})
\quad \text{as $h\to0$}.\ee
Clearly, $q$ depends only on (i)\,$g(x)$ at $x=a$ and $x=b$ and (ii)\,$\psi(\xi)$, and is independent of~$m$.
\end{theorem}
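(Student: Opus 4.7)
The plan is to start from the asymptotic relation \eqref{eqasd} already established just before the theorem statement, and observe that the infinite sum collapses to the finite sums in \eqref{eqmeven}--\eqref{eqmodd} because of two independent mechanisms: (a) the factor $[1+(-1)^{i-m}]$ kills every term of opposite parity, and (b) the trivial zeros of the Riemann zeta function kill all but finitely many of the remaining terms. Neither step is deep; the work is essentially bookkeeping with indices and parities. The main thing to be careful about is keeping track of the restriction $i\neq m-1$ in the summation index.

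First I would note that $[1+(-1)^{i-m}]=2$ when $i-m$ is even and $0$ when $i-m$ is odd. Hence in the sum of \eqref{eqasd} only those $i$ with the same parity as $m$ contribute. In particular, the excluded index $i=m-1$ has parity opposite to $m$, so the exclusion $i\neq m-1$ is automatic and can be dropped. Thus \eqref{eqasd} becomes
\[
h\sum_{j=1}^{n-1}{\cal F}(\tau+jh)=I[F]+2\sum_{\substack{i\geq 0\\ i\equiv m\,(\text{mod }2)}}\frac{G^{(i)}(\tau)}{i!}\,\zeta(m-i)\,h^{i-m+1}+O(h^q).
\]

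Next I would split into the two parity cases and reindex. For $m=2r$, I would write $i=2k$ with $k=0,1,2,\ldots$, giving the terms $2\,G^{(2k)}(\tau)/(2k)!\,\zeta(2r-2k)\,h^{-2r+2k+1}$. For $m=2r+1$, I would write $i=2k+1$ with $k=0,1,2,\ldots$, giving $2\,G^{(2k+1)}(\tau)/(2k+1)!\,\zeta(2r-2k)\,h^{-2r+2k+1}$. In both cases the zeta argument is $2r-2k$.

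Finally I would invoke the identity $\zeta(-2\ell)=0$ for $\ell=1,2,\ldots$ recalled in the introduction: the argument $2r-2k$ is negative even precisely when $k\geq r+1$, and for $k=r$ the value $\zeta(0)=-1/2$ is nonzero, so every term with $k\geq r+1$ drops out and the sums terminate at $k=r$. This yields exactly \eqref{eqmeven} and \eqref{eqmodd}, with the error term $R_q(h)=O(h^q)$ from \eqref{eqasd1} unchanged. Since $R_q(h)$ depends only on the smoothness of ${\cal F}$ at the interior point $\beta$, which in turn depends only on the behavior of $g$ at $x=a,b$ and on $\psi$, the claim that $q$ is independent of $m$ is immediate. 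The only step that is even mildly nontrivial is verifying that the excluded index $i=m-1$ carries opposite parity to $m$ and hence contributes nothing anyway, so that dropping it causes no problem.
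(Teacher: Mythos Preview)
Your proposal is correct and follows essentially the same route as the paper: the paper's argument, given in the paragraph immediately preceding the theorem, is precisely that the factor $[1+(-1)^{i-m}]$ leaves only terms with $i-m$ even (so the zeta arguments are all even integers) and that the trivial zeros $\zeta(-2k)=0$, $k\geq1$, then truncate the sum to finitely many terms. Your explicit observation that the excluded index $i=m-1$ automatically has parity opposite to $m$, so the restriction $i\neq m-1$ can be dropped, is a nice clarification that the paper leaves implicit.
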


In view of \eqref{eqmeven}--\eqref{eqmodd}, we define our PVTSI$^{(m)}$ numerical quadrature  formulas $\widehat{T}^{(s)}_{m,n}[{\cal F}]$ for $I[F]$ precisely as those in \cite{Sidi:2019:SSI-P1}, which we have summarized in  Section \ref{se2}:\\
\noindent{1. For $m=2r$, $r=1,2, \ldots,$}

\be \label{eqTmeven0} \widehat{T}^{(0)}_{2r,n}[{\cal F}]=h\sum^{n-1}_{j=1} {\cal F}(\tau+jh)
-2\sum^r_{i=0}\frac{G^{(2i)}(\tau)}{(2i)!}\,\zeta(2r-2i)\,h^{-2r+2i+1}.
\ee
\noindent{2. For $m=2r+1$, $r=0,1, \ldots,$}
\be \label{eqTmodd0} \widehat{T}^{(0)}_{2r+1,n}[{\cal F}]=h\sum^{n-1}_{j=1} {\cal F}(\tau+jh)
-2\sum^r_{i=0}\frac{G^{(2i+1)}(\tau)}{(2i+1)!}\,\zeta(2r-2i)\,h^{-2r+2i+1}.\ee

From these, we obtain  the rest of the formulas $\widehat{T}^{(s)}_{m,n}[{\cal F}]$ with $s=1,2,\ldots$ precisely as the $\widehat{T}^{(s)}_{m,n}[f]$ described in Section \ref{se2}.  For example, with
$h=\T/n,$ for $m=1,2,3$, we have the following numerical quadrature formulas:

\begin{enumerate}
\item The case  $m=1$:
\begin{subequations}
\begin{align} \widehat{T}^{(0)}_{1,n}[{\cal F}]&=h\sum^{n-1}_{j=1}{\cal F}(\tau+jh)+G'(\tau)\,h,  \label{eqTT10}  \\
 \widehat{T}^{(1)}_{1,n}[{\cal F}]&=h\sum^n_{j=1}{\cal F}(\tau+jh-h/2) \label{eqTT11}.
 \end{align}
 \end{subequations}
 \item The case  $m=2$:
 \begin{subequations}
\begin{align} \widehat{T}^{(0)}_{2,n}[{\cal F}]&=h\sum^{n-1}_{j=1}{\cal F}(\tau+jh)-\frac{\pi^2}{3}\,G(\tau)\,h^{-1}
   +\frac{1}{2}\,G''(\tau)\,h, \label{eqTT20}\\
 \widehat{T}^{(1)}_{2,n}[{\cal F}]&=h\sum^n_{j=1}{\cal F}(\tau+jh-h/2)
 -\pi^2 G(\tau)h^{-1}, \label{eqTT21}  \\
\widehat{T}^{(2)}_{2,n}[{\cal F}]&=2h\sum^n_{j=1}{\cal F}(\tau+jh-h/2)-
\frac{h}{2}\sum^{2n}_{j=1}{\cal F}(\tau+jh/2-h/4) \label{eqTT22}.
\end{align}
 \end{subequations}
\item  The case  $m=3$:
\begin{subequations}
\begin{align} \widehat{T}^{(0)}_{3,n}[{\cal F}]&=h\sum^{n-1}_{j=1}{\cal F}(\tau+jh)-\frac{\pi^2}{3}\,G'(\tau)\,h^{-1}
   +\frac{1}{6}\,G'''(\tau)\,h, \label{eqTT30}\\
 \widehat{T}^{(1)}_{3,n}[{\cal F}]&=h\sum^n_{j=1}{\cal F}(\tau+jh-h/2)-\pi^2\,G'(\tau)\,h^{-1}, \label{eqTT31}\\
 \widehat{T}^{(2)}_{3,n}[{\cal F}]&=2h\sum^n_{j=1}{\cal F}(\tau+jh-h/2)-
\frac{h}{2}\sum^{2n}_{j=1}{\cal F}(\tau+jh/2-h/4) \label{eqTT32}.
\end{align}
 \end{subequations}
 \item
{\em The case $m=4$}:
\begin{subequations}
\begin{align}
\widehat{T}^{(0)}_{4,n}[{\cal F}]&=h\sum^{n-1}_{j=1}{\cal F}(\tau+jh)-\frac{\pi^4}{45}G(\tau)h^{-3}
-\frac{\pi^2}{6}G''(\tau)h^{-1}+\frac{1}{24}G^{(4)}(\tau)h \label{eqT40} \\
\widehat{T}^{(1)}_{4,n}[{\cal F}]&=h\sum^{n}_{j=1}{\cal F}(\tau+jh-h/2)-\frac{\pi^4}{3}G(\tau)h^{-3}
-\frac{\pi^2}{2}G''(\tau)h^{-1} \label{eqT41} \\
\widehat{T}^{(2)}_{4,n}[{\cal F}]&=2h\sum^{n}_{j=1}{\cal F}(\tau+jh-h/2)
-\frac{h}{2}\sum^{2n}_{j=1}{\cal F}(\tau+jh/2-h/4)+2\pi^4G(\tau)h^{-3} \label{eqT42}  \\
\widehat{T}^{(3)}_{4,n}[{\cal F}]&=\frac{16h}{7}\sum^{n}_{j=1}{\cal F}(\tau+jh-h/2)
-\frac{5h}{7}\sum^{2n}_{j=1}{\cal F}(\tau+jh/2-h/4)\notag \\
&\hspace{4.5cm}+\frac{h}{28}\sum^{4n}_{j=1}{\cal F}(\tau+jh/4-h/8) \label{eqT43}
\end{align}
\end{subequations}
\end{enumerate}
 Before, we go on, we wish to emphasize that no limitations are put on $\tau\in(\alpha,\beta)$ in these formulas.    Therefore, no limitations are put on $t\in(a,b)$, either.

Concerning all of these formulas, we have the following convergence theorem that is analogous to Theorem \ref{thw1t}:
\begin{theorem}\label{thw2} Under the conditions imposed on $f(x)$,  $F(\xi)$, and ${\cal F}(\xi)$, there holds $\lim_{n\to\infty}\widehat{T}^{(s)}_{m,n}[{\cal F}]=I[F]$ for all $m$ and $s$. Actually, there holds

\be \widehat{T}^{(s)}_{m,n}[{\cal F}]-I[F]=O(n^{-q})\quad\text{as $n\to\infty$,\quad for each $m$ and $s$}.\ee
Clearly, $q$ depends only on (i)\,$g(x)$ at $x=a$ and $x=b$ and (ii)\,$\psi(\xi)$, and is independent of~$m$.
\end{theorem}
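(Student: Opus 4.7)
The plan is to reduce Theorem \ref{thw2} to Theorem \ref{thwww} by exploiting the linearity of the Richardson-like extrapolation and the normalization $\sum_{k=0}^s \alpha^{(s)}_{m,k}=1$ recalled in \eqref{eqalpha}.

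First I would handle the base case $s=0$. By construction, the explicit correction terms subtracted in \eqref{eqTmeven0} and \eqref{eqTmodd0} are exactly the explicit asymptotic terms appearing on the right-hand sides of \eqref{eqmeven} and \eqref{eqmodd} in Theorem \ref{thwww}. Subtracting these from $h\sum_{j=1}^{n-1}{\cal F}(\tau+jh)$ therefore leaves precisely
\[
\widehat{T}^{(0)}_{m,n}[{\cal F}]-I[F]=O(h^{q})=O(n^{-q}),\qquad h=\T/n,
\]
for both the even-$m$ and odd-$m$ cases. Here I rely on the fact, established in items 1--4 leading to Theorem \ref{thwww}, that ${\cal F}\in C^\infty(J_1)$, ${\cal F}\in C^{q-1}(J_2)$ with ${\cal F}^{(q)}$ absolutely integrable in $J_2$, and that ${\cal F}$ has the endpoint asymptotic expansions \eqref{eqF111}--\eqref{eqF222}, so that Theorem \ref{thA} of the appendix is applicable, yielding the remainder bound $R_q(h)=O(h^q)$.

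Next I would handle general $s\ge 1$ by invoking the structural identity \eqref{eqalpha}, which states that $\widehat{T}^{(s)}_{m,n}[{\cal F}]=\sum_{k=0}^{s}\alpha^{(s)}_{m,k}\,\widehat{T}^{(0)}_{m,2^kn}[{\cal F}]$ with coefficients independent of $n$ summing to $1$. Writing
\[
\widehat{T}^{(s)}_{m,n}[{\cal F}]-I[F]
=\sum_{k=0}^{s}\alpha^{(s)}_{m,k}\bigl(\widehat{T}^{(0)}_{m,2^kn}[{\cal F}]-I[F]\bigr),
\]
and applying the base case estimate to each summand, we obtain
\[
\bigl|\widehat{T}^{(s)}_{m,n}[{\cal F}]-I[F]\bigr|
\le \sum_{k=0}^{s}\bigl|\alpha^{(s)}_{m,k}\bigr|\cdot O\bigl((2^k n)^{-q}\bigr)
=O(n^{-q}),
\]
which is the asserted bound. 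The independence of $q$ from $m$ and $s$ stated in the theorem follows because $q$ is determined entirely by the smoothness at $\xi=\beta$ of the periodized integrand ${\cal F}$, which in turn depends only on the endpoint behavior of $g(x)$ at $x=a,b$ and on the choice of $\psi(\xi)$ (as discussed in subsection \ref{sse42}).

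The main obstacle is conceptual rather than computational: one must verify that the ``error term'' $R_q(h)$ produced by Theorem \ref{thA} really survives the extrapolation with the same order. This is immediate once one observes that the extrapolation \eqref{eqalpha} was designed to cancel only the explicit monomial terms $h^{-2r+2i+1}$ coming from $G^{(2i)}(\tau)$ or $G^{(2i+1)}(\tau)$ in \eqref{eqmeven}--\eqref{eqmodd}, never touching the remainder; since the coefficients sum to $1$, the constant $I[F]$ is preserved as well, and the remainder is dominated term-by-term by $O(n^{-q})$. No additional differentiability assumptions beyond those already enforced on ${\cal F}$ are needed.
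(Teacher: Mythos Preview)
Your proposal is correct and is precisely the argument the paper intends: the paper does not spell out a separate proof of Theorem~\ref{thw2} because, as you observe, the case $s=0$ is immediate from Theorem~\ref{thwww} by the very definition of $\widehat{T}^{(0)}_{m,n}[{\cal F}]$ in \eqref{eqTmeven0}--\eqref{eqTmodd0}, and the general case follows by linearity from \eqref{eqalpha} together with $\sum_{k=0}^s\alpha^{(s)}_{m,k}=1$.
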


\noindent{\bf Remarks.}
\begin{enumerate}
\item \label{re52}
Let us  recall the first remark at the end of subsection \ref{sse42} that says that we can make $q$ in \eqref{eqxx} as large as we wish by choosing $\psi(\xi)$ appropriately. From this and from Theorem \ref{thw2}, it is clear that we can  improve the accuracy of the $\widehat{T}^{(s)}_{m,n}[{\cal F}]$  at will by choosing $\psi(\xi)$ such that  $r$ in \eqref{eqyy} is sufficiently large to make $q$ as large as we wish.
\item
Note that because  $\alpha<\tau<\beta$, some of the abscissas in each of the quadrature formulas above are in $(\alpha,\beta)$, while others  are necessarily in $(\beta,\beta+\T)$.
We can invoke the $\T$-periodicity of ${\cal F}(\xi)$ for those abscissas in
$(\beta,\beta+\T)$. Thus, in all the formulas
$\widehat{T}^{(0)}_{m,n}[{\cal F}]$, (i)\,if $\tau+jh\leq\beta$, then ${\cal F}(\tau+jh)=F(\tau+jh)$,
while (ii)\,if $\tau+jh>\beta$,
then ${\cal F}(\tau+jh)=F(\tau+jh-\T)=F(\tau-(n-j)h)$ since $\tau-(n-j)h\in[\alpha,\tau)$.
\item
Clearly, we need  $G^{(i)}(\tau)$, $i=0,1,2,\ldots,$ in the
quadrature formulas $\widehat{T}^{(s)}_{m,n}[{\cal F}]$. We recall that  $G(\tau)$
and $G'(\tau)$ are  given in \eqref{eqpsi44}--\eqref{eqpsi45}. All of the $G^{(i)}(\tau)$ can be obtained by differentiating  $G(\xi)$  in \eqref{eqpsi43}  and letting $\xi\to\tau$. For this, it is clear that  we also need the derivatives with respect to $\xi$ of $Q(\xi)=\psi[\xi,\tau]$, evaluated at $\xi=\tau$.
Expanding $\psi(\xi)$ in a Taylor series about $\xi=\tau$, it is readily seen that
$$ Q^{(k)}(\tau)=\frac{\psi^{(k+1)}(\tau)}{k+1}, \quad k=0,1,\ldots.$$
\end{enumerate}

\section{Numerical examples with PVTSI$^{(m)}$ quadrature formulas}\label{se6}
\setcounter{equation}{0} \setcounter{theorem}{0}
We have applied the PVTSI$^{(m)}$ formulas, with the variable transformations in \eqref{eqc3}--\eqref{eqc5}, to several HFP integrals with $m=1,2,3.$ The numerical results obtained lead us to conclude that they achieve high accuracies  in all cases.

In all these examples,  $[a,b]=[0,1]$ and $[\alpha,\beta]=[0,1]$ and we present those results obtained by using the variable transformation $\hpsi(\xi)=\xi^p/[\xi^p+(1-\xi)^p]$ with $p=5$, $p=10$,  and $p=15$. (All three transformations in \eqref{eqc3}--\eqref{eqc5} seem to produce very similar numerical results.)
We have carried out all our computations in quadruple-precision arithmetic (approximately 34 decimal digits).

Below, we use the notation
$$ I_m(t)=\intBar^b_a \frac{g(x)}{(x-t)^m}\,dx, \quad m=1,2 \ldots.$$

We first  treat three HFP integrals
involving the Chebyshev polynomials of the first and second kinds, namely,
$T_k(z)$ and $U_k(z)$, respectively,
 as examples.  The integrands have square-root singularities at the endpoints in all cases.
In all the three examples $g(x)$ is finite at the endpoints $x=a$ and $x=b$, but all its derivatives are unbounded there. (Note that the first two examples were also treated by
Choi, Kim, and Yun  \cite{Choi:2004:ISB}.)

\begin{example}\label{ex1}{\em With $m=1$:
$$ I_1(t)=\intbar^1_0\sqrt{x(1-x)}\,\frac{U_k(2x-1)}{x-t}\,dx=-\frac{\pi}{2}\, T_{k+1}(2t-1).$$
We have computed $I_1(t)$ with $k=4$ and  for $t=0.3$.\\
The exact value of the integral is $I_1(0.3)=1.38833262547440142794141136393888$.
The results of the computation are given in Table \ref{table1}.
\begin{table}[ht]
$$ \begin{array}{||r||c|c||c|c||c|c||}
\hline
k&E^{(0)}_{1,2^k},\,p=5&E^{(1)}_{1,2^k},\,p=5&E^{(0)}_{1,2^k},\,p=10&E^{(1)}_{1,2^k},\,p=10&
E^{(0)}_{1,2^k},\,p=15&E^{(1)}_{1,2^k},\,p=15\\
\hline\hline
     1& 6.021D+00& 9.420D-01& 1.295D+01& 1.000D+00& 1.989D+01& 1.000D+00\\
     2& 2.539D+00& 1.586D+00& 5.973D+00& 9.413D-01& 9.445D+00& 9.974D-01\\
     3& 4.769D-01& 4.916D-01& 2.516D+00& 1.587D+00& 4.224D+00& 9.209D-01\\
     4& 7.352D-03& 7.352D-03& 4.641D-01& 4.807D-01& 1.652D+00& 1.654D+00\\
     5& 1.505D-08& 1.505D-08& 8.315D-03& 8.315D-03& 1.401D-03& 1.237D-03\\
     6& 6.128D-15& 6.137D-15& 2.398D-08& 2.398D-08& 8.164D-05& 8.164D-05\\
     7& 4.636D-18& 4.920D-18& 1.529D-20& 1.529D-20& 3.535D-12& 3.535D-12\\
     8& 1.420D-19& 1.397D-19& 4.162D-34& 1.387D-34& 3.760D-29& 3.758D-29\\
     9& 1.131D-21& 1.142D-21& 2.774D-34& 2.358D-33& 1.110D-32& 3.510D-32\\
    10& 5.575D-24& 5.624D-24& 1.249D-33& 5.965D-33& 2.303D-32& 7.047D-32\\
\hline
\end{array}$$
\caption{\label{table1} Relative errors in the approximations $\widehat{T}^{(0)}_{1,n}[{\cal F}]$
and $\widehat{T}^{(1)}_{1,n}[{\cal F}]$  for $t=0.3$ in Example \ref{ex1}.
Here $E^{(s)}_{1,n}=|\widehat{T}^{(s)}_{1,n}[{\cal F}] -I_1(0.3)|/|I_1(0.3)|$.}
\end{table}
}
\end{example}

\begin{example}\label{ex2}{\em With $m=2$:
$$I_2(t)=\intBar^1_0\sqrt{x(1-x)}\,\frac{U_k(2x-1)}{(x-t)^2}\,dx=-\pi(k+1)U_k(2t-1).$$
We have computed $I_2(t)$ with $k=4$ and for $t=0.3.$ \\
The exact value of the integral is $I_2(0.3)=8.01734445196115234455666591412929$.
The results of the computation are given in Table \ref{table2}.

\begin{table}[ht]
$$ \begin{array}{||r|| c|c||c|c||c|c||}
\hline
k&E^{(1)}_{2,2^k},\,p=5&E^{(2)}_{2,2^k},\,p=5&E^{(1)}_{2,2^k},\,p=10&E^{(2)}_{2,2^k},\,p=10&
E^{(1)}_{2,2^k},\,p=15&E^{(2)}_{2,2^k},\,p=15\\
\hline\hline
    1& 8.428D-01& 9.769D-01& 9.316D-01& 1.027D+00& 9.543D-01& 1.001D+00\\
    2& 7.087D-01& 1.137D+00& 8.357D-01& 9.643D-01& 9.076D-01& 1.084D+00\\
    3& 2.807D-01& 5.613D-01& 7.072D-01& 1.126D+00& 7.310D-01& 7.883D-01\\
    4& 2.132D-04& 4.263D-04& 2.879D-01& 5.756D-01& 6.737D-01& 1.284D+00\\
    5& 1.155D-09& 2.311D-09& 2.176D-04& 4.352D-04& 6.352D-02& 1.271D-01\\
    6& 2.748D-15& 5.486D-15& 2.040D-09& 4.080D-09& 2.245D-05& 4.491D-05\\
    7& 9.955D-18& 1.990D-17& 1.149D-22& 2.298D-22& 1.148D-13& 2.296D-13\\
    8& 1.494D-20& 2.925D-20& 5.765D-34& 1.286D-31& 1.192D-30& 2.469D-30\\
    9& 6.307D-22& 1.263D-21& 8.244D-32& 9.280D-31& 8.360D-32& 6.822D-32\\
   10& 1.485D-24& 2.990D-24& 8.552D-32& 2.994D-30& 9.897D-32& 6.726D-33\\
\hline
\end{array}$$
\caption{\label{table2} Relative errors in the approximations $\widehat{T}^{(1)}_{2,n}[{\cal F}]$
and $\widehat{T}^{(2)}_{2,n}[{\cal F}]$  for $t=0.3$ in Example \ref{ex2}.
Here $E^{(s)}_{2,n}=|\widehat{T}^{(s)}_{2,n}[{\cal F}] -I_2(0.3)|/|I_2(0.3)|$.}
\end{table}
}
\end{example}

\begin{example}\label{ex3}{\em With $m=3$:
$$ I_3(t)=\intBar^1_0\sqrt{x(1-x)}\,\frac{U_k(2x-1)}{(x-t)^3}\,dx=-\pi(k+1)U'_k(2t-1).$$
Here $U_k'(z)=\frac{d}{dz}U_k(z)$. We have computed $I_3(t)$ with $k=4$ and for $t=0.3.$\\ The exact value of the integral is $I_3(0.3)=-86.4566298267911099224919459078519$.
The results of the computation are given in Table \ref{table3}.

\begin{table}[ht]
$$ \begin{array}{||r|| c|c||c|c||c|c||}
\hline
k&E^{(1)}_{3,2^k},\,p=5&E^{(2)}_{3,2^k},\,p=5&E^{(1)}_{3,2^k},\,p=10&E^{(2)}_{3,2^k},\,p=10&
E^{(1)}_{3,2^k},\,p=15&E^{(2)}_{3,2^k},\,p=15\\
\hline\hline
    1& 7.291D-01& 1.069D+00& 8.632D-01& 1.000D+00& 9.087D-01& 1.000D+00\\
    2& 3.889D-01& 7.611D-01& 7.262D-01& 1.068D+00& 8.174D-01& 1.009D+00\\
    3& 1.665D-02& 3.339D-02& 3.840D-01& 7.517D-01& 6.263D-01& 1.078D+00\\
    4& 8.650D-05& 1.730D-04& 1.631D-02& 3.272D-02& 1.748D-01& 3.510D-01\\
    5& 3.819D-11& 7.638D-11& 1.076D-04& 2.152D-04& 1.410D-03& 2.820D-03\\
    6& 9.547D-16& 1.908D-15& 6.819D-11& 1.364D-10& 4.450D-07& 8.901D-07\\
    7& 1.382D-18& 2.751D-18& 1.172D-23& 2.345D-23& 6.046D-15& 1.209D-14\\
    8& 1.332D-20& 2.645D-20& 8.009D-31& 1.031D-30& 1.123D-30& 6.811D-30\\
    9& 1.988D-22& 3.982D-22& 5.260D-30& 1.194D-29& 9.029D-30& 5.592D-29\\
   10& 6.917D-25& 1.394D-24& 3.938D-29& 1.119D-28& 7.368D-29& 4.094D-28\\
\hline
\end{array}$$
\caption{\label{table3} Relative errors in the approximations $\widehat{T}^{(1)}_{3,n}[{\cal F}]$
and $\widehat{T}^{(2)}_{3,n}[{\cal F}]$  for $t=0.3$ in Example \ref{ex3}.
Here $E^{(s)}_{3,n}=|\widehat{T}^{(s)}_{3,n}[{\cal F}] -I_3(0.3)|/|I_3(0.3)|$.  }
\end{table}
}
\end{example}

In the  next three examples,  we have obtained the $I_m(t)$ via iteration of the known relation (see Kaya and Erdogan \cite{Kaya:1987:SIE}, for example)
$$ \intBar^b_a\frac{g(x)}{(x-t)^{k+1}}\,dx=\frac{1}{k}\frac{d}{dt}
\intBar^b_a\frac{g(x)}{(x-t)^k}\,dx,\quad k=1,2,\ldots.$$
Thus, starting with
$$ I_1(t)=M(t)+g(t)H(t); \quad M(t)=\intbar^b_a\frac{g(x)-g(t)}{x-t}\,dx,\quad H(t)=\log\frac{b-t}{t-a},$$ we have
$$ I_2(t)=M'(t)+g(t)H'(t)+g'(t)H(t),$$
$$I_3(t)=\frac{1}{2}\big[M''(t)+g(t)H''(t)+2g'(t)H'(t)+g''(t)H(t)\big],$$
and so on.

\begin{example}\label{ex4}{\em With $m=1$:
$$ I_1(t)=\intbar^1_0\frac{1+x-x^2}{x-t}\,dx=\frac{1}{2}-t+(1+t-t^2)\log\frac{1-t}{t}.$$
We have computed $I_1(t)$   for $t=0.3$.\\
The exact value of the integral is $I_1(0.3)=1.22523041106851637258923008288999$.
The results of the computation are given in Table \ref{table4}.

\begin{table}[ht]
$$ \begin{array}{||r|| c|c||c|c||c|c||}
\hline
k&E^{(0)}_{1,2^k},\,p=5&E^{(1)}_{1,2^k},\,p=5&E^{(0)}_{1,2^k},\,p=10&E^{(1)}_{1,2^k},\,p=10&
E^{(0)}_{1,2^k},\,p=15&E^{(1)}_{1,2^k},\,p=15\\
\hline\hline
 1& 1.596D+00& 8.813D-01& 4.289D+00& 9.989D-01& 6.961D+00& 1.000D+00\\
 2& 3.573D-01& 3.343D-01& 1.645D+00& 8.913D-01& 2.980D+00& 9.871D-01\\
 3& 1.148D-02& 1.151D-02& 3.769D-01& 3.515D-01& 9.966D-01& 7.188D-01\\
 4& 1.269D-05& 1.269D-05& 1.269D-02& 1.273D-02& 1.389D-01& 1.381D-01\\
 5& 2.058D-09& 2.226D-09& 1.674D-05& 1.674D-05& 3.762D-04& 3.764D-04\\
 6& 8.396D-11& 8.672D-11& 6.311D-12& 6.311D-12& 8.610D-08& 8.610D-08\\
 7& 1.384D-12& 1.544D-12& 7.592D-24& 7.778D-24& 3.897D-16& 3.897D-16\\
 8& 8.010D-14& 7.609D-14& 9.289D-26& 9.300D-26& 3.348D-32& 1.030D-31\\
 9& 2.005D-15& 2.169D-15& 5.720D-29& 5.728D-29& 6.319D-32& 2.020D-31\\
 10& 8.210D-17& 8.384D-17& 4.841D-32& 2.358D-33& 1.383D-31& 4.139D-31\\
  \hline
\end{array}$$
\caption{\label{table4} Relative errors in the approximations $\widehat{T}^{(0)}_{1,n}[{\cal F}]$
and $\widehat{T}^{(1)}_{1,n}[{\cal F}]$  for $t=0.3$ in Example \ref{ex4}.
Here $E^{(s)}_{1,n}=|\widehat{T}^{(s)}_{1,n}[{\cal F}] -I_1(0.3)|/|I_1(0.3)|$.  }
\end{table}
}
\end{example}

\begin{example}\label{ex5}{\em With $m=2$:
$$ I_2(t)=\intBar^1_0\frac{1+x-x^2}{(x-t)^2}\,dx=-1-\frac{1+t-t^2}{t(1-t)}
+(1-2t)\log\frac{1-t}{t}.$$
We have computed $I_2(t)$   for $t=0.3$.\\
The exact value of the integral is $I_2(0.3)=-6.42298561774988045927786175929650.$
The results of the computation are given in Table \ref{table5}.
\begin{table}[ht]
$$ \begin{array}{||r|| c|c||c|c||c|c||}
\hline
k&E^{(1)}_{2,2^k},\,p=5&E^{(2)}_{2,2^k},\,p=5&E^{(1)}_{2,2^k},\,p=10&E^{(2)}_{2,2^k},\,p=10&
E^{(1)}_{2,2^k},\,p=15&E^{(2)}_{2,2^k},\,p=15\\
\hline\hline
     1& 2.009D-01& 3.877D-01& 5.588D-01& 9.044D-01& 7.051D-01& 9.889D-01\\
     2& 1.406D-02& 2.911D-02& 2.131D-01& 4.106D-01& 4.213D-01& 7.419D-01\\
     3& 1.000D-03& 2.000D-03& 1.565D-02& 3.245D-02& 1.008D-01& 2.024D-01\\
     4& 3.704D-07& 7.416D-07& 1.158D-03& 2.316D-03& 7.653D-04& 1.362D-03\\
     5& 7.123D-10& 1.457D-09& 4.543D-07& 9.086D-07& 1.691D-04& 3.381D-04\\
     6& 3.218D-11& 6.489D-11& 1.917D-13& 3.833D-13& 7.533D-09& 1.507D-08\\
     7& 5.275D-13& 1.082D-12& 2.045D-23& 4.079D-23& 4.429D-18& 8.859D-18\\
     8& 2.738D-14& 5.397D-14& 1.148D-25& 2.297D-25& 1.295D-32& 3.094D-31\\
     9& 7.942D-16& 1.619D-15& 6.963D-29& 1.437D-28& 2.893D-31& 4.553D-31\\
    10& 3.044D-17& 6.124D-17& 9.111D-31& 1.930D-29& 1.049D-30& 5.133D-32\\
\hline
\end{array}$$
\caption{\label{table5} Relative errors in the approximations $\widehat{T}^{(1)}_{2,n}[{\cal F}]$
and $\widehat{T}^{(2)}_{2,n}[{\cal F}]$  for $t=0.3$ in Example \ref{ex5}.
Here $E^{(s)}_{2,n}=|\widehat{T}^{(s)}_{2,n}[{\cal F}] -I_2(0.3)|/|I_2(0.3)|$.}
\end{table}

}
\end{example}

\begin{example}\label{ex6}{\em With $m=3$:
$$ I_3(t)=\intBar^1_0\frac{1+x-x^2}{(x-t)^3}\,dx=
\frac{(1+t-t^2)(1-2t)}{2[t(1-t)]^2}-\frac{1-2t}{t(1-t)}-\log\frac{1-t}{t}.$$
We have computed $I_3(t)$  for $t=0.3$.\\
The exact value of the integral is $I_3(0.3)=2.73546857952209343844408750481721$.
The results of the computation are given in Table \ref{table6}.

\begin{table}[ht]
$$ \begin{array}{||r|| c|c||c|c||c|c||}
\hline
k&E^{(1)}_{3,2^k},\,p=5&E^{(2)}_{3,2^k},\,p=5&E^{(1)}_{3,2^k},\,p=10&E^{(2)}_{3,2^k},\,p=10&
E^{(1)}_{3,2^k},\,p=15&E^{(2)}_{3,2^k},\,p=15\\
\hline\hline
 1& 1.034D-01& 1.359D-01& 3.659D-01& 8.073D-01& 5.726D-01& 9.778D-01\\
 2& 7.095D-02& 1.408D-01& 7.539D-02& 8.150D-02& 1.674D-01& 4.613D-01\\
 3& 1.115D-03& 2.229D-03& 6.928D-02& 1.372D-01& 1.266D-01& 2.306D-01\\
 4& 4.647D-10& 6.997D-09& 1.363D-03& 2.727D-03& 2.254D-02& 4.505D-02\\
 5& 7.926D-09& 1.618D-08& 5.015D-07& 1.003D-06& 3.534D-05& 7.069D-05\\
 6& 3.289D-10& 6.634D-10& 5.371D-14& 1.074D-13& 1.738D-09& 3.476D-09\\
 7& 5.653D-12& 1.159D-11& 7.388D-23& 1.470D-22& 1.898D-18& 3.795D-18\\
 8& 2.848D-13& 5.614D-13& 7.120D-25& 1.424D-24& 1.820D-28& 1.134D-27\\
 9& 8.178D-15& 1.667D-14& 4.612D-28& 1.352D-27& 1.490D-27& 9.074D-27\\
10& 3.150D-16& 6.336D-16& 6.468D-27& 1.888D-26& 1.211D-26& 6.680D-26\\
\hline
\end{array}$$
\caption{\label{table6} Relative errors in the approximations $\widehat{T}^{(1)}_{3,n}[{\cal F}]$
and $\widehat{T}^{(3)}_{2,n}[{\cal F}]$  for $t=0.3$ in Example \ref{ex6}.
Here $E^{(s)}_{3,n}=|\widehat{T}^{(s)}_{3,n}[{\cal F}] -I_3(0.3)|/|I_3(0.3)|$.}
\end{table}
}
\end{example}

\noindent{\bf Remarks.}
\begin{enumerate}
\item
Judging from Tables \ref{table1}--\ref{table6}, we may conclude that,  for each $m$, the quadrature formulas  $\widehat{T}{}^{(0)}_{m,n}[f]$
and $\widehat{T}{}^{(1)}_{m,n}[f]$ produce approximately the same accuracies. This is consistent with Theorem \ref{thw2} that says that $\widehat{T}{}^{(s)}_{m,n}[f]-I[f]=O(n^{-q})$ as $n\to\infty$  simultaneously for $s=0,1,\ldots$; that is, both formulas converge at the same rate as $n\to\infty$.
\item
The floating-point computation of HFP integrals is accompanied by roundoff errors that increase with $n$.
As discussed in Sidi \cite{Sidi:2014:RES} and \cite{Sidi:2019:SSI-P1},
these errors grow like (i)\,${\bf u}\log n$ when $m=1$, (ii)\,${\bf u}n$ when $m=2$, and (iii)\,${\bf u}n^2$ when $m=3$, where ${\bf u}$ is the roundoff unit of the floating-point arithmetic being used. The numbers at the bottom of Tables \ref{table1}--\ref{table6} (especially those corresponding to $n=2^8, 2^9,2^{10}$ with $p=10$ and $p=15$) exhibit this behavior since  ${\bf u}=1.93\times 10^{-34}$  for quadruple-precision arithmetic. Because the methods we have developed here converge quickly due to the fact that $q$ can be made as large as we wish, sufficiently high accuracy is achieved before roundoff errors start to be felt.   This is one important feature of our methods.
 \end{enumerate}

\appendix
\section*{Appendix: Further generalization of the Euler--Maclaurin \\ expansion}\label{AAA}
\setcounter{section}{1}
\setcounter{equation}{0} \setcounter{theorem}{0}

We begin with the  classical E--M expansion with remainder:
\begin{theorem} \label{thEM}
Let $w\in C^{p-1}[a,b]$, $w^{(p)}(x)$ being absolutely integrable on $[a,b]$, and let $h=(b-a)/n$, $n=1,2, \ldots.$ Then
\be\label{eqz1} h\sum^n_{j=0}{}^{''}w(a+jh)=\int^b_aw(x)\,dx+
\sum^p_{k=2}\frac{B_k}{k!}[w^{(k-1)}(b)-w^{(k-1)}(a)]h^k+R_p(h),\ee
where the remainder term $R_p(h)$ is given as
\be\label{eqz2} R_p(h)=-h^p\int^b_a w^{(p)}(x)\frac{\bar{B}_p(n\frac{a-x}{b-a})}{p!}\,dx.\ee
Here,  $B_k$ are  Bernoulli numbers, $\bar{B}_k(z)$  are periodic Bernoulli functions,\footnote{$\bar{B}_k(z)$ is the 1-periodic extension of the Bernoulli polynomial $B_k(z)$.} and
$$\displaystyle \sum^n _{j=0}{}^{''}\epsilon_j=\frac{1}{2}\epsilon_0+\sum^{n-1}_{j=1}\epsilon_j+
\frac{1}{2}\epsilon_n.$$
\end{theorem}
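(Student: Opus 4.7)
The plan is to prove the identity on a single subinterval by repeated integration by parts against the Bernoulli polynomials, then sum over all $n$ subintervals so that the boundary contributions telescope, and finally rewrite the leftover integral in terms of the periodic Bernoulli function as stated. Fix $j\in\{0,1,\ldots,n-1\}$, substitute $x=a+jh+ht$ to obtain
\[\int_{a+jh}^{a+(j+1)h}w(x)\,dx=h\int_0^1 f_j(t)\,dt,\quad f_j(t):=w(a+jh+ht),\]
and repeatedly use $B_{k-1}(t)=B_k'(t)/k$ together with $B_k(0)=B_k(1)=B_k$ (valid for $k\ge 2$) to integrate by parts $p$ times. A bookkeeping of the signs and factorials gives
\[\int_0^1 f_j(t)\,dt=\tfrac12\bigl(f_j(0)+f_j(1)\bigr)-\sum_{k=2}^{p}\frac{B_k}{k!}\bigl(f_j^{(k-1)}(1)-f_j^{(k-1)}(0)\bigr)+\frac{(-1)^p}{p!}\int_0^1 f_j^{(p)}(t)\,B_p(t)\,dt.\]
The hypothesis $w\in C^{p-1}[a,b]$ with $w^{(p)}$ absolutely integrable guarantees that $w^{(p-1)}$ is absolutely continuous, so the final integration by parts is legitimate in the Lebesgue sense.

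Next I would convert derivatives of $f_j$ to derivatives of $w$ via $f_j^{(k)}(t)=h^k w^{(k)}(a+jh+ht)$, multiply by $h$, and sum over $j=0,1,\ldots,n-1$. The boundary contributions telescope:
\[\sum_{j=0}^{n-1}\bigl(w^{(k-1)}(a+(j+1)h)-w^{(k-1)}(a+jh)\bigr)=w^{(k-1)}(b)-w^{(k-1)}(a),\]
and the trapezoidal endpoint values assemble into $h\sum_{j=0}^{n}{}^{''}w(a+jh)$. After rearrangement this yields the Euler--Maclaurin identity of the statement, with remainder equal to
\[R_p(h)=-\frac{(-1)^p h^{p+1}}{p!}\sum_{j=0}^{n-1}\int_0^1 w^{(p)}(a+jh+ht)\,B_p(t)\,dt.\]

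The last step is to recast this remainder in the form \eqref{eqz2}. Undoing the substitution on each piece gives a sum of integrals of $w^{(p)}(x)\,B_p\!\bigl((x-a-jh)/h\bigr)$, which reassemble into a single integral over $[a,b]$ of $w^{(p)}(x)$ against a piecewise function. To relate that piecewise function to $\bar B_p\!\bigl(n(a-x)/(b-a)\bigr)=\bar B_p\!\bigl((a-x)/h\bigr)$, I would use the reflection identity $B_p(1-u)=(-1)^pB_p(u)$ together with the 1-periodicity of $\bar B_p$: for $x\in[a+jh,a+(j+1)h]$, setting $u=(x-a-jh)/h\in[0,1]$,
\[\bar B_p\!\Bigl(\tfrac{a-x}{h}\Bigr)=\bar B_p\!\Bigl(\tfrac{a-x}{h}+j+1\Bigr)=B_p(1-u)=(-1)^p B_p(u).\]
Substituting this absorbs the factor $(-1)^p$ and one power of $h$ (from $dx=h\,dt$), leaving exactly $R_p(h)=-h^p\!\int_a^b w^{(p)}(x)\,\bar B_p\!\bigl(n(a-x)/(b-a)\bigr)/p!\,dx$.

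I expect the only nonroutine step to be the sign and shift bookkeeping at the end: keeping track of $(-1)^p$ factors from the repeated integration by parts, verifying the correct argument for $\bar B_p$ so that periodicity plus the reflection formula convert local variables to the global variable $n(a-x)/(b-a)$. The differentiability hypothesis is exactly what is needed to run $p$ integrations by parts; all other steps are algebraic.
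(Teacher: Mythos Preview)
Your proof is correct and follows the standard textbook derivation of the Euler--Maclaurin formula with integral remainder: integrate by parts against Bernoulli polynomials on each subinterval, telescope the boundary contributions, and reassemble the remainder via the periodicity and reflection identity $B_p(1-u)=(-1)^p B_p(u)$. Note, however, that the paper does \emph{not} prove this theorem at all; it simply states it as the classical Euler--Maclaurin expansion and refers the reader to Steffensen, Ralston--Rabinowitz, Stoer--Bulirsch, and Sidi for details. Your argument is essentially the one found in those references, so there is no discrepancy to report --- you have supplied what the paper deliberately omitted.
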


For more on the classical E--M expansion
with remainder, see Steffensen \cite{Steffensen:1950:I},  Ralston and Rabinowitz \cite[pp. 136--138]{Ralston:1978:FCN},
Stoer and Bulirsch \cite[pp. 156--159]{Stoer:2002:INA}, and  Sidi \cite[Appendix D]{Sidi:2003:PEM}, for example. In this work, we make use of the following corollary of Theorem \ref{thEM}:

\begin{corollary} \label{cor}
When $w^{(i)}(a)=w^{(i)}(b)=0$, $i=0,1,\ldots,p-1$, \eqref{eqz1}--\eqref{eqz2} reduce to
\be\label{eqz3}h\sum^{n-1}_{j=1}w(a+jh)=\int^b_aw(x)\,dx+R_p(h),\ee
with $R_p(h)=O(h^p)\quad\text{as $h\to 0$.}$
Actually,
\be\label{eqz35} \big|R_p(h)\big|\leq C_p\,h^p, \quad
C_p=\frac{1}{p!}\,\big(\max_{0\leq z\leq 1}\big|B_p(z)\big|\big)
\bigg(\int^b_a\big|w^{(p)}(x)\big|\,dx\bigg)<\infty.
\ee
\end{corollary}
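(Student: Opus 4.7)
The plan is to derive the corollary as a straightforward specialization of Theorem \ref{thEM} under the vanishing boundary conditions on $w$ and its derivatives, so no new analytical machinery is needed. First I would write down \eqref{eqz1}--\eqref{eqz2} verbatim with $w$ as given, and then apply the hypothesis $w^{(i)}(a)=w^{(i)}(b)=0$ for $i=0,1,\ldots,p-1$ term by term.

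Two simplifications occur simultaneously. On the left side, the trapezoidal sum with the double-prime convention reads $h\sum^n_{j=0}{}^{''}w(a+jh)=\tfrac{h}{2}w(a)+h\sum^{n-1}_{j=1}w(a+jh)+\tfrac{h}{2}w(b)$, and the endpoint pieces vanish because $w(a)=w(b)=0$ (the $i=0$ case). On the right side, the Bernoulli correction sum $\sum^p_{k=2}\frac{B_k}{k!}[w^{(k-1)}(b)-w^{(k-1)}(a)]h^k$ vanishes entirely because each index $k-1$ runs over $1,2,\ldots,p-1$, which are exactly the orders on which $w$ and its derivatives vanish at both endpoints by hypothesis. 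This reduces \eqref{eqz1} to the clean identity \eqref{eqz3}.

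For the remainder estimate, I would keep the integral representation \eqref{eqz2} and simply bound it in absolute value. Since $\bar{B}_p(z)$ is the $1$-periodic extension of the Bernoulli polynomial $B_p(z)$, we have $\max_{z\in\mathbb{R}}|\bar{B}_p(z)|=\max_{0\leq z\leq 1}|B_p(z)|$, a finite constant depending only on $p$. Pulling this bound outside the integral and using the assumed absolute integrability of $w^{(p)}$ on $[a,b]$ yields the stated estimate
\[
|R_p(h)|\leq \frac{h^p}{p!}\Bigl(\max_{0\leq z\leq 1}|B_p(z)|\Bigr)\int^b_a |w^{(p)}(x)|\,dx,
\]
which is finite, and in particular $R_p(h)=O(h^p)$ as $h\to 0$. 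There is no real obstacle here: the only subtlety worth mentioning is the shift from $\bar{B}_p$ to $\max |B_p|$ on $[0,1]$, which is immediate from the definition of the periodic Bernoulli function. Everything else is bookkeeping on top of Theorem \ref{thEM}.
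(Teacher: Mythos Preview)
Your proposal is correct and follows essentially the same approach as the paper: the paper's proof also reduces the trapezoidal sum via $w(a)=w(b)=0$, kills the Bernoulli correction terms using $w^{(i)}(a)=w^{(i)}(b)=0$ for $i=1,\ldots,p-1$, and then obtains \eqref{eqz35} by taking absolute values in \eqref{eqz2}. Your added remark about passing from $\bar{B}_p$ to $\max_{[0,1]}|B_p|$ is a nice clarification but not a departure from the paper's argument.
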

\begin{proof}
First, because
$$ h\sum^n_{j=0}{}^{''}w(a+jh)= h\sum^{n-1}_{j=1}w(a+jh) \quad\text{since $w(a)=w(b)=0$},$$
and because
$$ \sum^p_{k=2}\frac{B_k}{k!}[w^{(k-1)}(b)-w^{(k-1)}(a)]h^k=0,\quad
\text{since $w^{(i)}(a)=w^{(i)}(b)=0$, $i=1,\ldots,p-1$,}$$
\eqref{eqz1} reduces to \eqref{eqz3}.
Next, \eqref{eqz35} follows by taking absolute values in \eqref{eqz2}.
\end{proof}

In \cite{Navot:1961:EEM}, Navot   extended Theorem \ref{thEM} to integrands $f(x)$ with an algebraic end-point singularity of the form $f(x)=(x-a)^\alpha g(x)$ with $\alpha>-1$, $g\in C^\infty[a,b]$. Using a different approach,  Lyness and Ninham \cite{Lyness:1967:NQA}, extended the   E--M expansion further to  singular integrands  of the form $f(x)=(x-a)^\alpha g_a(x)=(b-x)^\beta g_b(x)$ with $\alpha,\beta>-1$, $g_a\in C^\infty[a,b)$, $g_b\in C^\infty(a,b]$.
 Theorem 2.3 in Sidi \cite{Sidi:2012:EME-P1}, generalizes all the above in that (i)\, it applies to finite-range integrals  of integrands  that have {\em arbitrary} algebraic endpoint singularities and (ii)\,these  integrals can be  defined  in the regular sense or in the sense of HFP. Thus, it contains as special cases, but is not contained in, the classical E--M expansion and its extensions given in \cite{Navot:1961:EEM} and \cite{Lyness:1967:NQA}.

Theorem \ref{thA} below, which we use in this work, is an  extension of  Theorem 2.3 in  \cite{Sidi:2012:EME-P1}. Thus, it is also a further extension of the classical E--M expansion.

\begin{theorem}\label{thA}
Let $u\in C^{\infty}(I_1)$  and
$u\in C^{p-1}(I_2)$, $u^{(p)}(x)$ being absolutely integrable in $I_2$,
where
$$I_1=(a,a'')\cup(b'',b),\quad I_2=[a'',b''],\quad a<a''<b''<b,$$  and assume that $u(x)$ has the
asymptotic expansions
\be\label{eq37}\begin{split}
&u(x)\sim K(x-a)^{-1}+\sum^{\infty}_{s=0}c_s\,(x-a)^{\gamma_s}
\quad \text{as}\   x\to a+,\\
&u(x)\sim L(b-x)^{-1}+\sum^{\infty}_{s=0}d_s\,(b-x)^{\delta_s}
\quad \text{as}\  x\to b-,
\end{split}\ee
where the $\gamma_s$ and
$\delta_s$  are distinct complex numbers that satisfy
\begin{equation}\label{eq38}
\begin{matrix}
&\gamma_s\neq -1\quad \forall  s;\quad \text{\em Re\,}\gamma_0\leq\text{\em Re\,}\gamma_1\leq\text{\em Re\,}\gamma_2\leq\cdots;
&\lim_{s\to\infty}\text{\em Re\,}\gamma_s=+\infty,\\ \\
&\delta_s\neq -1\quad \forall s;\quad \text{\em Re\,}\delta_0\leq\text{\em Re\,}\delta_1\leq\text{\em Re\,}\delta_2\leq\cdots;
&\lim_{s\to\infty}\text{\em Re\,}\delta_s=+\infty.
\end{matrix}
\end{equation}
Assume furthermore that, for each positive integer $k$,
$u^{(k)}(x)$ has  asymptotic expansions as $x\to a+$ and
$x\to b-$ that are obtained by differentiating those of
$u(x)$  term by term $k$ times.\footnote{We express this briefly by saying that
``the asymptotic expansions in \eqref{eq37} can be differentiated infinitely  many times.''}
Let also $h=(b-a)/n$ for  $n=1,2,\ldots.$ Then, as $h\to0$,
\begin{align}
h\sum^{n-1}_{j=1}u(a+jh)\sim \intBar^b_au(x)\,dx+R_p(h)&+ K(C-\log h)+
\sum^{\infty}_{\substack{s=0\\ \gamma_s\not\in\{2,4,6,\ldots\}}}
c_s\,\zeta(-\gamma_s)\,h^{\gamma_s+1} \notag\\
&+L(C-\log h)+\sum^{\infty}_{\substack{s=0\\ \delta_s\not\in\{2,4,6,\ldots\}}}
d_s\,\zeta(-\delta_s)\,h^{\delta_s+1}, \label{eq39}
\end{align}
where $R_p(h)=O(h^p)$ as $h\to0$ and  $C=0.577\cdots$ is Euler's constant.\footnote{Note that the  constants $K$ and/or $L$ in \eqref{eq37} hence in  \eqref{eq39} can be zero.}
\end{theorem}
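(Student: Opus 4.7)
The plan is to reduce Theorem~\ref{thA} to two ingredients that are already available: Theorem~2.3 of~\cite{Sidi:2012:EME-P1}, which handles integrands with purely algebraic endpoint singularities, and Corollary~\ref{cor}, which handles a $C^{p-1}$ interior. Accordingly, I would introduce a smooth partition of unity $\chi_a+\chi_0+\chi_b\equiv 1$ on $[a,b]$, with $\chi_a\equiv 1$ on a small right neighborhood of $a$ and supported in $[a,a'')$, $\chi_b\equiv 1$ on a small left neighborhood of $b$ and supported in $(b'',b]$, and $\chi_0$ supported in $[a'',b'']$. Then $u=u\chi_a+u\chi_0+u\chi_b$, and the trapezoidal sum, the HFP integral, and the asymptotic series all split additively across the three pieces, so it suffices to treat each piece separately and add.

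For the middle piece $u\chi_0$, the function vanishes to all orders at $a$ and $b$ and lies in $C^{p-1}[a,b]$ with $(u\chi_0)^{(p)}$ absolutely integrable, so Corollary~\ref{cor} applies directly and contributes $\int_a^b u\chi_0\,dx + R_p(h)$ with $R_p(h)=O(h^p)$; this is the $R_p(h)$ term in \eqref{eq39}. For the endpoint pieces I would isolate the pole by writing $u\chi_a=K\chi_a(x)/(x-a)+v_a(x)$, where $v_a=(u-K/(x-a))\chi_a$ has, by \eqref{eq37}, the purely algebraic expansion $\sum_{s\ge 0}c_s(x-a)^{\gamma_s}$ as $x\to a+$, differentiable infinitely many times, and vanishes identically on a neighborhood of $b$. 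Theorem~2.3 of \cite{Sidi:2012:EME-P1} then applies to $v_a$ and produces exactly the $c_s\,\zeta(-\gamma_s)\,h^{\gamma_s+1}$ series of \eqref{eq39}; the exclusion $\gamma_s\not\in\{2,4,\dots\}$ is automatic because $\zeta(-2k)=0$ for $k\ge1$. The symmetric argument for $u\chi_b$ produces the $d_s\,\zeta(-\delta_s)\,h^{\delta_s+1}$ series.

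The main obstacle, and the one genuinely new calculation, is the simple-pole contribution $K\chi_a(x)/(x-a)$, which lies outside the scope of \cite{Sidi:2012:EME-P1}. Here I would pick $a^\star\in(a,a'')$ on which $\chi_a\equiv 1$ and split the sum at $j^\star=\lfloor(a^\star-a)/h\rfloor$. On $[a^\star,b]$ the function $K\chi_a(x)/(x-a)$ is $C^\infty$, so the classical Euler--Maclaurin formula of Theorem~\ref{thEM} applies and the discrete/continuous discrepancy there is $O(h^p)$. The remaining head of the sum is the explicit harmonic partial sum
\[
h\sum_{j=1}^{j^\star}\frac{K}{jh}=K\sum_{j=1}^{j^\star}\frac{1}{j}=K\bigl(\log j^\star+C+O(1/j^\star)\bigr)=-K\log h+K\log(a^\star-a)+KC+O(h),
\]
after substituting $\log j^\star=\log((a^\star-a)/h)+O(h)$. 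Matching this against the HFP value $\intBar_a^b(K\chi_a/(x-a))\,dx$, which by the regularization definition recalled after Theorem~\ref{thpsi} discards the divergent $-K\log\epsilon$ term arising from $\int_{a+\epsilon}^{b}(K\chi_a/(x-a))\,dx$ and retains $K\log(a^\star-a)+(\text{smooth terms})$, one sees that the $K\log(a^\star-a)$ parts cancel and the finite residue is precisely $K(C-\log h)$. A symmetric calculation at $b$ yields $L(C-\log h)$. Adding the three contributions reproduces \eqref{eq39}; the remaining care is in verifying that the decomposition is independent of the arbitrary partition of unity, which follows from linearity of the HFP and of the $R_p(h)$ remainder.
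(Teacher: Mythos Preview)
Your overall strategy---split $u$ via a smooth partition of unity into an ``interior'' piece handled by Corollary~\ref{cor} and ``endpoint'' pieces handled by the cited generalized Euler--Maclaurin result---is exactly the paper's approach. The paper uses a two-function decomposition $u=u_++u_-$ with a single neutralizer $R_+$ that equals~$1$ near \emph{both} endpoints, rather than your three-function $\chi_a+\chi_0+\chi_b$; this is cosmetic.

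The substantive difference is your treatment of the simple-pole term. You assert that $K\chi_a(x)/(x-a)$ ``lies outside the scope of \cite{Sidi:2012:EME-P1}'' and therefore carry out a separate harmonic-sum computation to extract $K(C-\log h)$. In fact, as noted in the paragraph preceding Theorem~\ref{thA} and in Remark~\ref{re}, Theorem~2.3 of \cite{Sidi:2012:EME-P1} already covers arbitrary algebraic endpoint singularities including the $(x-a)^{-1}$ pole and HFP integrals, and already delivers the $K(C-\log h)$ and $L(C-\log h)$ terms. Accordingly, the paper's proof simply observes that $u_+\in C^\infty(a,b)$ shares the full asymptotic expansions \eqref{eq37} with $u$ and applies that theorem to $u_+$ in one stroke, with no pole-splitting or harmonic-sum calculation needed. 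Your explicit computation is correct (and is essentially a rederivation of a special case of the cited theorem), but it is extra work that the paper avoids by invoking the reference at full strength.
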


\noindent{\bf Remarks.}
\begin{enumerate}
\item
Note that if $K=L=0$ and $\Re\gamma_0>-1$ and $\Re\delta_0>-1$, then $\int^b_a u(x)\,dx$ exists as a regular integral. Otherwise, it does not, but its HFP $\intBar^b_a u(x)\,dx$ exists.
\item
When $u(x)$ is infinitely differentiable at $x=a$ and $x=b$, its Taylor series at $x=a$ and at $x=b$, whether convergent or divergent, are also (i)\,its asymptotic expansions  as $x\to a+$ and as $x\to b-$, respectively, and (ii)\,they can be differentiated term by term any number of times.
Thus, Theorem \ref{thA} applies without further assumptions on $u(x)$ in this case.
\item \label{re}
When $u\in C^\infty(a,b)$, we have that $p=\infty$; therefore, $R_p(h)$  is absent from \eqref{eq39} since its contribution is smaller than each of  the terms in the infinite sums there. That is, when $u\in C^\infty(a,b)$, the  generalization of the  E--M expansion is completely determined by the asymptotic expansions of $u(x)$ as $x\to a+$ and as $x\to b-$, nothing  else being needed. What happens in $(a,b)$ is immaterial. Precisely this result was obtained  in Sidi
\cite[Theorem 2.3]{Sidi:2012:EME-P1} and we shall make use of it when proving Theorem \ref{thA}. Thus,  Theorem \ref{thA} is a nontrivial  extension of  Theorem 2.3 in \cite{Sidi:2012:EME-P1}.
\item
It is clear from \eqref{eq39}  that the positive even powers of
$(x-a)$ and  $(b-x)$,
if present in the asymptotic expansions of $u(x)$ as
$x\to a+$ and $x\to b-$,
do not contribute to the asymptotic expansion of
$h\sum^{n-1}_{j=1}u(a+jh)$ as $h\to 0$, the reason being that
$\zeta(-2k)=0$ for $k=1,2,\ldots.$  We have included the ``limitations''
$\gamma_s\not\in\{2,4,6,\ldots\}$ and $\delta_s\not\in\{2,4,6,\ldots\}$ in the sums on the right-hand side of \eqref{eq39} only as ``reminders.''

\item Theorem 2.3 in \cite{Sidi:2012:EME-P1} is only a special case of a more general theorem there
 involving the so-called {\em offset trapezoidal rule}
$h\sum^{n-1}_{i=0}u(a+jh+\theta h)$, with $\theta\in(0,1]$ fixed,\footnote{Note that, with $\theta=1/2$, the offset trapezoidal rule becomes the mid-point rule.}
    that contains as special cases  all previously known generalizations of the E--M expansions
    for integrals with {\em algebraic} endpoint singularities. For  a further generalization
pertaining to arbitrary {\em algebraic-logarithmic} endpoint singularities, see
    Sidi \cite{Sidi:2012:EME-P2}.
\end{enumerate}

\begin{proof} To prove \eqref{eq39}, we begin by constructing two so-called {\em neutralizers} $R_\pm(x)\in C^\infty[a,b]$, such that $R_+(x)+R_-(x)=1$ for all $x$, as follows: Choosing $a',b'$ such that
$$ a<a'<a''<b''<b'<b,$$ we let
\begin{align*}
&R_+(x)=1\quad\text{for $x\in[a,a']\cup[b',b]$};\quad
&R_+(x)=0\quad\text{for $x\in[a'',b'']$}, \\
&R_-(x)=0\quad\text{for $x\in[a,a']\cup[b',b]$};\quad
&R_-(x)=1\quad\text{for $x\in[a'',b'']$},
\end{align*}
such that $R_+(x)$ ($R_-(x)$) decreases (increases) on $(a',a'')$ and increases (decreases)
on $(b'',b')$ and
 \beq\label{Rder}
   R_\pm^{(i)}(a')=R_\pm^{(i)}(a'')=R_\pm^{(i)}(b'')=R_\pm^{(i)}(b')=0,\quad i=1,2,\ldots.\eeq 

 With the functions $R_\pm(x)$ available, we now split $u(x)$ as  in
 \beq\label{sumf}u(x)=u_+(x)+u_-(x);\quad
 u_+(x)=R_+(x)u(x),\quad u_-(x)=R_-(x)u(x).\eeq

 First,  $u_+(x)\equiv u(x)$ for $x\in[a,a']\cup[b',b]$ and
 $u_+(x)\equiv0$ for $x\in[a'', b'']$; therefore, $u_+\in C^\infty(a,b)$
 and has the asymptotic expansions given in \eqref{eq37}. Consequently,
 Theorem 2.3 in \cite{Sidi:2012:EME-P1} applies (recall Remark \ref{re} following the statement of Theorem \ref{thA}), and we have, as $h\to0$,

 \begin{align}
h\sum^{n-1}_{j=1}u_+(a+jh)\sim\intBar^b_au_+(x)\,dx&+ K(C-\log h)+
\sum^{\infty}_{\substack{s=0\\ \gamma_s\not\in\{2,4,6,\ldots\}}}
c_s\,\zeta(-\gamma_s)\,h^{\gamma_s+1} \notag\\
&+L(C-\log h)+\sum^{\infty}_{\substack{s=0\\ \delta_s\not\in\{2,4,6,\ldots\}}}
d_s\,\zeta(-\delta_s)\,h^{\delta_s+1}\label{eq39a}
\end{align}

Next, $u_-(x)\equiv u(x)$ for $x\in[a'',b'']$ and $u_-(x)\equiv0$ for
$x\in[a,a']\cup[b',b]$; therefore, $u_-\in C^{p-1}[a,b]$
and $u_-^{(i)}(a)=u_-^{(i)}(b)=0$, $i=0,1,\ldots,p-1,$
and $u_-^{(p)}(x)$ is absolutely integrable in $[a,b]$.
Consequently,  Corollary \ref{cor}  applies to  $\intBar^b_au_-(x)\,dx$, which is now the regular integral
 $\int^b_au_-(x)\,dx$, and we have
\be \label{eq47} h\sum^{n-1}_{j=1}u_-(a+jh)=\int^b_au_-(x)\,dx+R_p(h),\quad
R_p(h)=O(h^p)\quad \text{as $h\to0$}.\ee

Finally,  adding \eqref{eq47} to \eqref{eq39a}, noting that
$$h\sum^{n-1}_{j=1}u_+(a+jh)+h\sum^{n-1}_{j=1}u_-(a+jh)=h\sum^{n-1}_{j=1}u(a+jh),$$
and  recalling also  that
$$\intBar^b_au_+(x)\,dx+\intBar^b_au_-(x)\,dx=\intBar^b_au(x)\,dx,$$
we obtain \eqref{eq39}. This completes the proof.
\end{proof}

\section*{Acknowledgement} The author would like to thank
Mr. Eitan Kaminski for producing the  graphs included in this paper.


\newpage
\vspace{-12cm}
\begin{center}
\begin{figure}[!]
\includegraphics[width=\columnwidth]{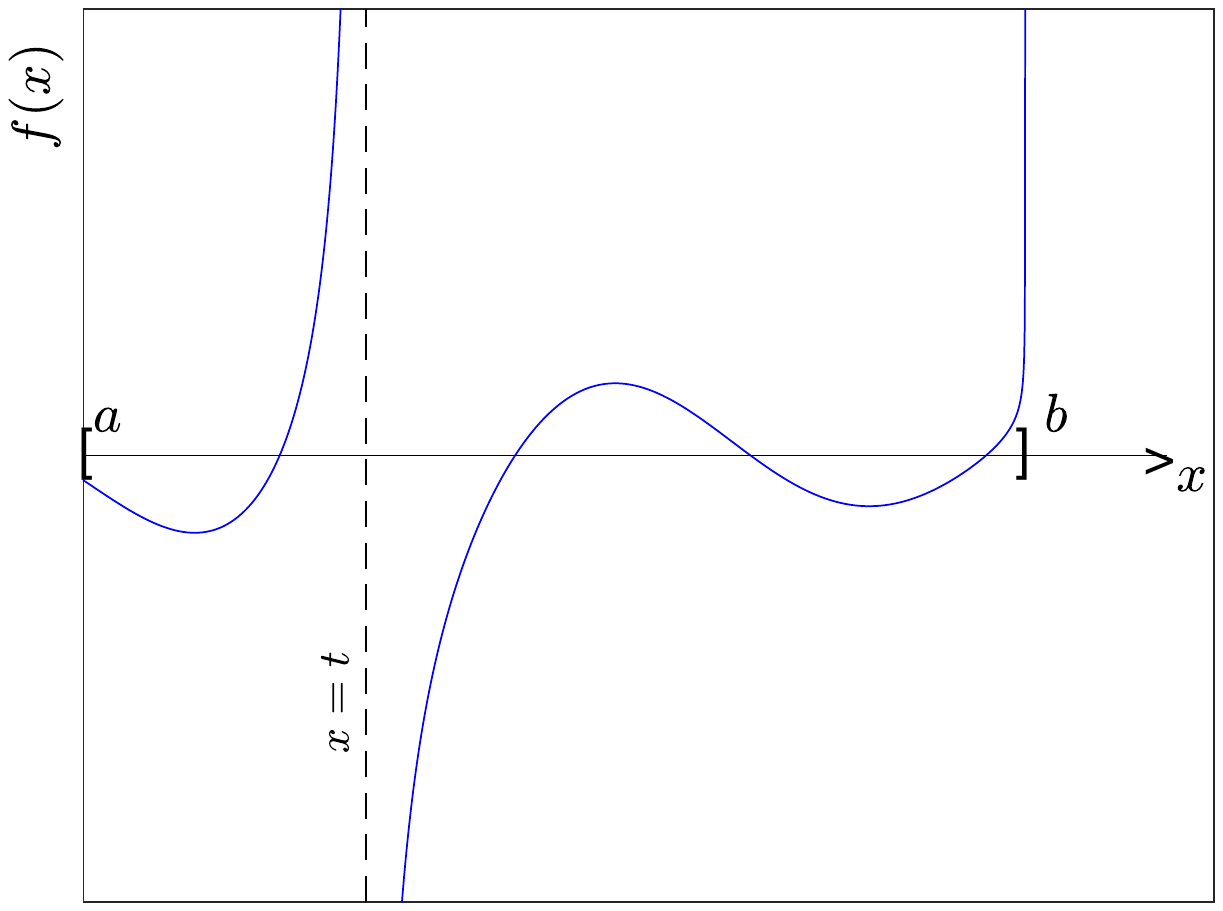}
\vspace{-7cm}
\caption{\label{fig1} Graph of the function $f(x)=\sin(4\pi x+\pi/6)(1-x)^{-1/3}/(x-t)$ on
$[a,b]=[0,1]$, with $t=0.3$.}
\end{figure}
\end{center}

\begin{center}
\begin{figure}[!]
\includegraphics[width=1.2\textwidth]{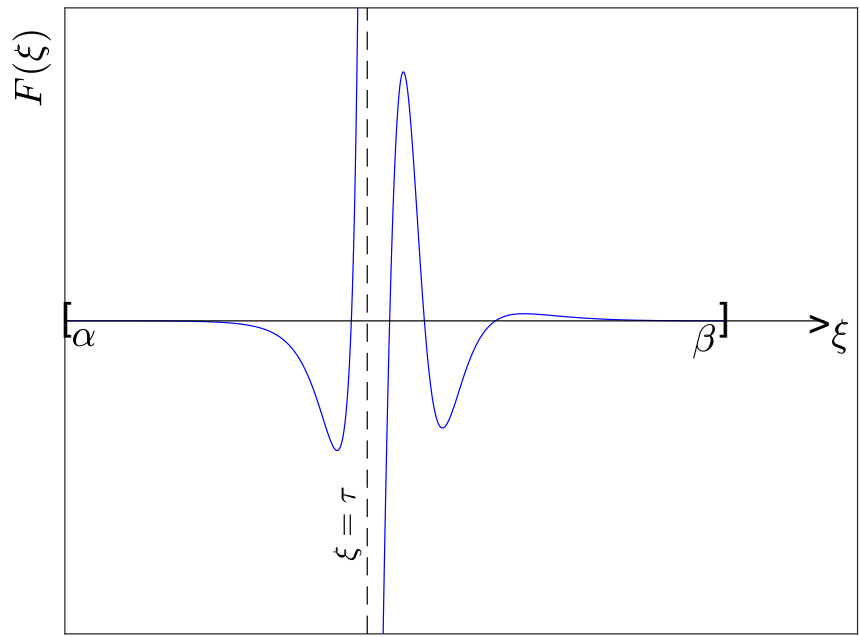}
\vspace{-7cm}
\caption{\label{fig2} Graph of the function ${F}(\xi)=f(\hpsi(\xi))\hpsi'(\xi)$
on $[\alpha,\beta]=[0,1]$. Here $f(x)$ is the function in Figure \ref{fig1},  $\hpsi(\xi)$ is as in \eqref{eqc3}  with $p=5$,  and $\tau$ is given by \eqref{eqtau1} with $p=5$ and $t=0.3$ there.}
\end{figure}
\end{center}

\begin{center}
\begin{figure}[!]
\includegraphics[width=1.2\textwidth]{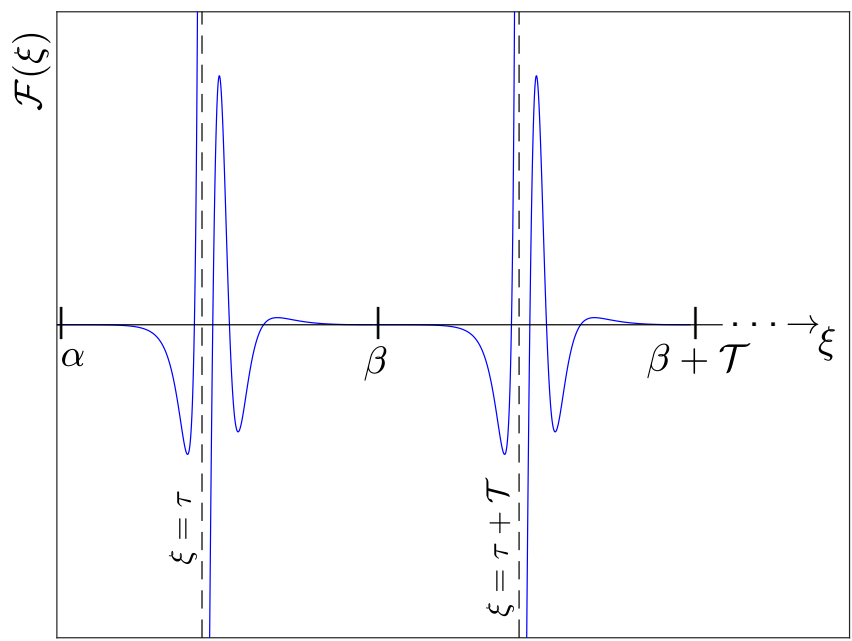}
\vspace{-7cm}
\caption{\label{fig3} Graph of the function $\mathcal{F}(\xi)$,
the $\mathcal{T}$-periodic extension of the function $F(\xi)=f(\hpsi(\xi))\hpsi'(\xi)$ in Figure \ref{fig2}, with $\mathcal{T}=\beta-\alpha=1$.}
\end{figure}
\end{center}

\end{document}